\newcommand{\ent}{{\mathbb{Z}}}
\newcommand{\real}{{\mathbb{R}}}
\newcommand{\comp}{{\mathbb{C}}}
\newcommand{\quat}{{\mathbb{H}}}
\newcommand{\oct}{{\mathbb{O}}}
\newcommand{\dist}{{\mathcal{H}}}
\newcommand{\s}{\vspace{0.5cm}}
\newtheorem{theo}{Theorem}
\newtheorem{lemma}{Lemma}
\newtheorem{defi}{Definition}
\newcommand{\spn}{{\rm{span}}}
\title{Sub-Riemannian geometry of parallelizable spheres}
\author[Mauricio Godoy M.,Irina Markina]{Mauricio Godoy Molina\\ Irina Markina}
\address{Department of Mathematics, University of Bergen, Norway.}
\email{mauricio.godoy@math.uib.no}
\address{Department of Mathematics, University of Bergen, Norway.}
\email{irina.markina@uib.no}
\thanks{The authors are partially supported by the grant of the
Norwegian Research Council \# 177355/V30 and by the grant of the
European Science Foundation Networking Programme HCAA}
\subjclass[2000]{53C17, 32V15}
\keywords{sub-Riemannian geometry, CR geometry, Hopf bundle,
Ehresmann connection, parallelizable spheres, quaternions,
octonions}
\begin{document}

\maketitle

\begin{abstract}

The first aim of the present paper is to compare various sub-Riemannian
structures over the three dimensional sphere $S^3$ originating
from different constructions. Namely, we describe the
sub-Riemannian geometry of $S^3$ arising through its right Lie
group action over itself, the one inherited from the natural
complex structure of the open unit ball in $\comp^2$ and the
geometry that appears when considering the Hopf map as a principal
bundle. The main result of this comparison is that in fact those
three structures coincide.

In the second place, we present two bracket generating
distributions for the seven dimensional sphere $S^7$ of step 2
with ranks 6 and 4. These yield to sub-Riemannian structures for
$S^7$ that are not present in the literature until now. One of the
distributions can be obtained by considering the CR geometry of
$S^7$ inherited from the natural complex structure of the open
unit ball in $\comp^4$. The other one originates from the
quaternionic analogous of the Hopf map.

\end{abstract}

\section{Introduction}

One of the main objectives of classical sub-Riemannian geometry is
to study manifolds which are path-connected by curves which are
admissible in a certain sense. In order to define what does
admissibility mean in this context, we begin by setting
notations that will be used throughout this paper. Let $M$ be a
smooth connected manifold of dimension $n$, together with a smooth
distribution $\dist\subset TM$ of rank $2\leq k<n$. Such vector bundles
are often called {\em{horizontal}} in the literature. An
absolutely continuous curve $\gamma:[0,1]\to M$ is called
{\em{admissible}} or {\em{horizontal}} if $\dot\gamma(t)\in\dist$
a.e.

Distributions satisfying the condition that their Lie-hull equals
the whole tangent space of the manifold at each point play a
central role in the search for horizontally path-connected
manifolds. Such vector bundles are said to satisfy the {\it
bracket generating condition}. To be more precise, define the
following real vector bundles
$$\dist^1=\dist,\quad\quad\dist^{r+1}=[\dist^r,\dist]+
\dist^r\quad\mbox{for }r\geq1,$$ which naturally give rise to the
flag $$\dist=\dist^1\subseteq\dist^2\subseteq\dist^3\subseteq
\ldots.$$ Then we say that a distribution is bracket generating if
for all $x\in M$ there is an $r(x)\in\ent^+$ such that
\begin{equation}\label{brgen}\dist_x^{r(x)}=T_xM.\end{equation}
If the dimensions $\dim\dist^r_x$ do not depend on $x$ for any $r\geq 1$, we say that
$\dist$ is a regular distribution. The least $r$
such that~\eqref{brgen} is satisfied is called the step
of $\dist$. We will focus on
regular distributions of step 2. In~\cite{LiuSussmann} the reader
can find detailed definitions and broad discussion about terminology.

The following classical result shows the precise relation between
the notion of path-connectedness by means of horizontal curves and
the assumption that $\dist$ is a bracket generating distribution.

\begin{theo}[\cite{Chow, R}]\label{Chow}
If a distribution $\dist\subset TM$ is bracket generating, then
the set of points that can be connected to $p\in M$ by a
horizontal path is the connected component of $M$ containing $p$.
\end{theo}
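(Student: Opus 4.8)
The plan is to run the classical argument showing that the horizontal reachable set is open and closed in its connected component. Write $\mathcal{A}(p)\subseteq M$ for the set of points that can be joined to $p$ by a horizontal path. Since the concatenation of two horizontal paths is horizontal, and the time-reversal $t\mapsto\gamma(1-t)$ of a horizontal path $\gamma$ is again absolutely continuous with velocity in $\dist$ almost everywhere, the relation ``$q\in\mathcal{A}(p)$'' is an equivalence relation on $M$. Hence $M$ is partitioned into the sets $\mathcal{A}(p)$, and it suffices to prove that each $\mathcal{A}(p)$ is \emph{open}: then $\mathcal{A}(p)$ is also closed, being the complement in $M$ of the union of the other (open) reachable sets, so it is a union of connected components; as $\mathcal{A}(p)$ is obviously path-connected, it is a single connected component, necessarily the one containing $p$.

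To prove openness, fix $q\in\mathcal{A}(p)$, work in a coordinate chart around $q$, and choose a smooth local frame $X_1,\dots,X_k$ for $\dist$. For a word $w=(i_1\dots i_\ell)$ in the symbols $1,\dots,k$ let $X_w$ be the corresponding iterated bracket, $X_{(i)}=X_i$ and $X_{(i_1\dots i_\ell)}=[X_{i_1},X_{(i_2\dots i_\ell)}]$. By the very definition of the flag $\dist=\dist^1\subseteq\dist^2\subseteq\cdots$ and the bracket generating hypothesis~\eqref{brgen}, the vectors $X_w(q)$ over all words $w$ span $T_qM$; hence we may select finitely many words $w_1,\dots,w_n$, $n=\dim M$, with
$$\spn\{X_{w_1}(q),\dots,X_{w_n}(q)\}=T_qM.$$
The analytic heart of the proof is the commutator (Baker--Campbell--Hausdorff type) estimate for flows: denoting by $\Phi^X_s$ the time-$s$ flow of $X$, one has, in coordinates,
$$\Phi^{-Y}_{\sqrt t}\circ\Phi^{-X}_{\sqrt t}\circ\Phi^{Y}_{\sqrt t}\circ\Phi^{X}_{\sqrt t}(x)=\Phi^{[X,Y]}_t(x)+o(t),\qquad t\to0^+,$$
and, iterating this identity, the flow $\Phi^{X_w}_t$ along any iterated bracket $X_w$ is approximated, to first order in $t$, by an explicit finite concatenation of flows of $\pm X_1,\dots,\pm X_k$ run for times that are positive fractional powers of $t$. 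Each curve $s\mapsto\Phi^{\pm X_j}_s(\,\cdot\,)$ is horizontal, its velocity lying in $\dist$; hence every such concatenation, started at $q$, traces a horizontal path, and its endpoint lies in $\mathcal{A}(q)=\mathcal{A}(p)$.

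It remains to check that these endpoints fill a neighborhood of $q$. The ``ideal'' map
$$G(t_1,\dots,t_n)=\big(\Phi^{X_{w_n}}_{t_n}\circ\cdots\circ\Phi^{X_{w_1}}_{t_1}\big)(q)$$
is smooth near $0\in\real^n$, sends $0\mapsto q$, and its differential at $0$ carries $\partial_{t_i}$ to $X_{w_i}(q)$; by our choice of words this differential is an isomorphism, so $G$ is a local diffeomorphism onto a neighborhood of $q$. Replacing each factor $\Phi^{X_{w_i}}_{t_i}$ by the horizontal concatenation that approximates it produces a \emph{continuous} map $F$ from a neighborhood of $0$ into $M$ with $F(0)=q$ and image contained in $\mathcal{A}(p)$, which is a small $C^0$-perturbation of the diffeomorphism $G$. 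A quantitative open mapping argument --- comparing $F$ with $G$ and applying, for instance, a Brouwer degree or fixed-point argument on small spheres around $q$ --- then shows $F$ is open at $0$, so $\mathcal{A}(p)$ contains a neighborhood of $q$. This establishes openness and finishes the proof. The step I expect to be the main obstacle is exactly this last passage: producing a spanning set of bracket directions is pure linear algebra and the clopen argument is routine, but moving along those bracket directions using only horizontal curves forces the commutator expansion, and the substitutions $t\mapsto t^{1/\ell}$ it introduces destroy differentiability at the origin, so ``fills a neighborhood'' cannot be read off from the inverse function theorem and needs the more delicate open mapping argument. (Alternatively one can package everything through Sussmann's Orbit Theorem: the orbits of the family of vector fields tangent to $\dist$ are immersed submanifolds whose tangent spaces contain the evaluated Lie algebra, hence open by~\eqref{brgen} and therefore clopen; identifying the orbit through $p$ with $\mathcal{A}(p)$ again relies on the same flow-concatenation estimates.)
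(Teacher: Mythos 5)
The paper offers no proof of this statement: it is quoted as the classical Chow--Rashevskii theorem, with the proof delegated to the cited references \cite{Chow,R} (see also \cite{M}), so there is nothing internal to compare your argument against. On its own terms, your outline is the standard proof and is structurally sound: the reduction to openness of the reachable set via the equivalence-relation/clopen argument is correct (in a manifold a nonempty clopen, path-connected set is exactly one component), the choice of spanning bracket words is pure linear algebra, and you correctly identify the genuine analytic content --- the commutator approximation of bracket flows by horizontal concatenations and the fact that the fractional-power reparametrizations $t\mapsto t^{1/\ell}$ ruin differentiability at the origin, so that surjectivity onto a neighborhood must come from a degree or open-mapping comparison of $F$ with $G$ rather than from the inverse function theorem. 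Two points are left implicit and would need to be supplied in a complete write-up: the concatenation realizing $\Phi^{X_w}_t$ only makes sense for $t\geq 0$, so negative parameter values must be handled by using the opposite bracket $[Y,X]$ (or by reordering the word), and the degree comparison requires the error $F-G$ to be $o(r)$ \emph{uniformly} on the sphere of radius $r$, which follows from smoothness of the frame on a compact neighborhood but should be stated, since it is exactly what guarantees $\lvert F-G\rvert$ is eventually smaller than the distance from $G(\partial B_r)$ to $q$. With those details filled in, the argument is complete; your alternative packaging via Sussmann's orbit theorem is also a legitimate route and is the one the paper itself gestures at in the remark following the distance definition.
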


Thus, the search for horizontally path-connected manifolds can
be reduced to the search of appropriate distributions defined on
them. We define the class of manifolds which will be of our concern.

\begin{defi}
A sub-Riemannian structure over a manifold $M$ is a pair $(\dist,
\langle\cdot,\cdot\rangle)$, where $\dist$ is a bracket generating distribution
and $\langle\cdot,\cdot\rangle$ a fiber inner product defined on
$\dist$. In this setting, the length of an absolutely continuous
horizontal curve $\gamma:[0,1]\to M$ is $$\ell(\gamma):= \int_0^1
\|\dot\gamma(t)\|dt,$$ where $\|\dot\gamma(t)\|^2= \langle\dot
\gamma(t),\dot\gamma(t)\rangle$ whenever $\dot\gamma(t)$ exists.
The triple $(M,\dist,\langle\cdot,\cdot\rangle)$ is called
sub-Riemannian manifold.
\end{defi}

Thereby, restricting ourselves to connected sub-Riemannian
manifolds endowed with bracket generating distributions, it is
possible to define the notion of sub-Riemannian distance between
two points.

\begin{defi}
The sub-Riemannian distance $d(p,q)\in\real_{\geq0}$ between two
points $p,q\in M$ is given by $d(p,q):=\inf\ell(\gamma)$, where
the infimum is taken over all absolutely continuous horizontal
curves joining $p$ to $q$.

An absolutely continuous horizontal curve that realizes the
distance between two points is called a horizontal length
minimizer.
\end{defi}

\vspace{0.3cm}

\paragraph{\bf{Remark:}} The connectedness of $M$ and the fact
that $\dist$ is bracket generating, assure that $d(p,q)$ is a
finite nonnegative number. Nevertheless, the bracket generating
hypothesis, required for the previous definition, is possible to be
weakened. In fact, in \cite{Suss} the author finds a necessary and
sufficient requirement to horizontal path-connectedness for a
manifold in terms of the corresponding distribution. Clearly, this
theorem contains, as a particular case, the bracket generating
condition.

\s

Historically, the first examples of sub-Riemannian manifolds that
have been considered were Lie groups, see
e.g.~\cite{ABGR,BR,CM,GV,HR}, because due to its algebraic
structure, it is possible to reduce the problem of finding
globally defined bracket generating distributions to finding such
distributions at the identity by considering right (or left)
invariant vector fields. An important role has been played by
considering domains in Euclidean spaces with special algebraic
structures (such as the Heisenberg groups, $\quat-$type groups as
their natural generalizations to Clifford algebras, Engel groups,
Carnot groups, etc.). Particular attention have had the three
dimensional unimodular Lie groups which were studied, for example,
in \cite{ABGR}, \cite{BR} and \cite{GV}. The main purpose of this
communication is to present recent results concerning different
sub-Riemannian structures of the second simplest family of
examples of manifolds, namely, spheres. We also be inspired by the
article~\cite{Urb}, where the close relation between the Hopf map
and physical applications is presented.

The following celebrated theorem in topology, see \cite{Adams},
gives a very strong restriction on the problem of finding globally
defined sub-Riemannian structures over spheres.

\begin{theo}[\cite{Adams}]\label{Adams}
Let $S^{n-1}=\{x\in\real^n:\|x\|^2=1\}$ be the unit sphere in
$\real^n$, with respect to the usual Euclidean norm $\|\cdot\|$.
Then $S^{n-1}$ has precisely $\varrho(n)-1$ globally defined and
non vanishing vector fields, where $\varrho(n)$ is defined in the
following way: if $n=(2a+1)2^b$ and $b=c+4d$ where $0\leq
c\leq3$, then $\varrho(n)=2^c+8d$.

In particular, two classical consequences follow: $S^1$, $S^3$ and
$S^7$ are the only spheres with maximal number of linearly
independent globally defined non vanishing vector fields, and the
even dimensional spheres have no globally defined and non vanishing
vector fields.
\end{theo}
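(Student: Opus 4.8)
The statement is the full solution of the vector fields problem on spheres, so the plan naturally splits into a \emph{lower bound} (explicit construction of fields) and an \emph{upper bound} (obstruction to having more), the latter being Adams' genuinely deep contribution. Throughout I would write $n=(2a+1)2^{b}$ with $b=c+4d$, $0\le c\le 3$, so that $\varrho(n)=2^{c}+8d$ depends only on the $2$-adic valuation $b$ of $n$.

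For the lower bound the approach is linear-algebraic (Hurwitz--Radon--Eckmann). The plan is to construct real $n\times n$ matrices $A_{1},\dots,A_{\varrho(n)-1}$ that are orthogonal and skew-symmetric, satisfy $A_{i}^{2}=-I$, and pairwise anticommute, $A_{i}A_{j}=-A_{j}A_{i}$ for $i\neq j$. Such a family produces vector fields $x\mapsto A_{i}x$ on $\real^{n}$: skew-symmetry gives $\langle A_{i}x,x\rangle=0$, so each is tangent to $S^{n-1}$, and the anticommutation relations force $x,A_{1}x,\dots,A_{\varrho(n)-1}x$ to be mutually orthogonal, hence everywhere linearly independent. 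Producing the $A_{i}$ is exactly producing a module over the Clifford algebra $C\ell_{\varrho(n)-1}$ of the appropriate dimension, and the classification of Clifford modules together with their $\mathrm{mod}\,8$ (Bott) periodicity is precisely what forces the arithmetic shape of $\varrho$; this step is a finite, if delicate, computation. In the three cases relevant to this paper the matrices are the left multiplications by the imaginary units of $\comp$, $\quat$ and $\oct$, which gives the classical parallelizations of $S^{1}$, $S^{3}$ and $S^{7}$ and hence the first stated consequence in the positive direction.

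For the upper bound the argument is topological, and this is where the real obstacle lies. A set of $k$ pointwise independent vector fields on $S^{n-1}$ amounts, together with the position vector, to a section of the Stiefel bundle $V_{k+1}(\real^{n})\to S^{n-1}$; following James this reduces the problem to the question of whether a certain stunted real projective space $\real P^{n+k-1}/\real P^{n-1}$ --- the Thom space of a multiple of the Hopf line bundle over $\real P^{n-1}$ --- splits off its top cell stably. Adams' theorem is the statement that this splitting fails once $k\ge\varrho(n)$, and the tool is topological $K$-theory with its Adams operations $\psi^{\ell}$: one computes $\widetilde{K}$ of the relevant stunted projective spaces explicitly, uses the multiplicativity of $\psi^{\ell}$ and the fact that on the canonical generators it acts through $(1+x)^{\ell}$-type formulas (hence through binomial coefficients), and shows that the $2$-adic orders of those coefficients obstruct the existence of the $K$-theory Thom class that a splitting would produce. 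Carrying this through needs the exact $K$-theoretic computation of stunted projective spaces, the explicit action of the $\psi^{\ell}$, and a sharp $2$-primary number-theoretic estimate; this Adams-operation argument is the hard core of the theorem, and it is what pins $\dim\dist = \varrho(n)-1$ exactly rather than merely $\ge$. Finally, the two consequences: $\varrho(n)=n$ (i.e.\ $S^{n-1}$ parallelizable) is an elementary check on the function $\varrho$ once the theorem is in hand, leaving only $n\in\{1,2,4,8\}$, while for $n-1$ even one has $b=0$, so $\varrho(n)=1$ and $\varrho(n)-1=0$, the non-existence of a nowhere-zero field being in any case the classical Poincar\'e--Hopf fact that $\chi(S^{2k})=2\neq 0$.
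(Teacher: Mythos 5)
The paper offers no proof of this theorem: it is quoted from Adams' paper, with the parallelizability consequence attributed to Bott--Milnor and the even-dimensional consequence to the Hopf index theorem. So the relevant question is whether your outline would stand on its own. Its architecture is the correct and standard one: the elementary parts are right --- skew-symmetry of each $A_i$ gives tangency of $x\mapsto A_ix$, anticommutativity makes $A_iA_j$ skew so that $x,A_1x,\dots$ are orthonormal on $S^{n-1}$, the identification of such families with Clifford modules is correct, the check that $\varrho(n)=n$ forces $n\in\{2,4,8\}$ (for positive-dimensional spheres) is correct, and the Poincar\'e--Hopf argument for even spheres is fine.

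The genuine gap is that the two nontrivial inputs are named rather than proved. For the lower bound you invoke, but do not establish, the classification of irreducible Clifford modules and the mod-$8$ periodicity of their dimensions; this is exactly where the formula $\varrho(n)=2^c+8d$ comes from, so it cannot be treated as a black box if the claim is the precise value of $\varrho$. For the upper bound --- the entire content of the word ``precisely'' --- you correctly identify James' reduction to (co)reducibility of stunted projective spaces and Adams' use of $K$-theory with the operations $\psi^{\ell}$, but none of the three essential computations is carried out: the additive structure of $\widetilde{K}$ (in fact Adams works in $\widetilde{KO}$, where the groups of stunted projective spaces are cyclic of an explicitly known $2$-power order, and this matters for the $2$-adic bookkeeping), the action of $\psi^{\ell}$ on the canonical generators, and the $2$-primary estimate on the resulting binomial coefficients that produces the contradiction for $k\ge\varrho(n)$. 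Be careful also with the direction of James' equivalence: what a field system yields is the splitting of the \emph{bottom} cell (coreducibility) of a stunted projective space, S-dual to the top-cell splitting you describe; getting the indices and the duality right is part of the work. As it stands the proposal is an accurate roadmap of Adams' proof, not a proof.
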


The condition that a manifold $M$ has maximal number of linearly
independent globally defined non vanishing vector fields is usually
rephrased as saying that $M$ is {\em parallelizable}. The fact that
$S^1$, $S^3$ and $S^7$ are the only parallelizable spheres was
proven in \cite{BM} and that the even dimensional spheres have no
globally defined and non vanishing vector fields is a consequence of
the Hopf index theorem, see \cite{Vick}.

This theorem permits to conclude at least two major points of
discussion: there is no possible global basis of a sub-Riemannian
structure for spheres with even dimension and it is impossible to
find a globally defined basis for bracket generating
distributions, except for $S^3$ and $S^7$. The fact that $S^3$ and
$S^7$ can be seen as the set of quaternions and octonions of unit
length will play a core role in many arguments throughout this
paper.

The main results that we present here are: a comparison between
three sub-Riemannian structures of $S^3$, namely, the one arising
through its right Lie group action over itself as the set of unit
quaternions, the one inherited from the natural complex structure
of the open unit ball in $\comp^2$ and the geometry that appears
when considering the Hopf map as a principal bundle. Notice that
this structure admits a tangent cone isomorphic to the one
dimensional Heisenberg group in a sense of
Gromov-Margulis-Mitchell-Mostow construction of the tangent
cone~\cite{Grom,Mitch,M}. Considering CR-structure of $S^7$,
inherited from the natural complex structure of the open unit ball
in $\comp^4$, we obtain a 2-step bracket generating distribution
of the rank 6. This construction intimately related to the Hopf
fibration $S^1\to S^7\to \mathbb CP^3$. Making use of the
quaternionic analogue of the Hopf map $S^3\to S^7\to S^4$, we
present another 2-step bracket generating distribution that has
the rank~4. We conclude that the sphere $S^7$ admits two
principally different sub-Riemannian structures. The tangent cone,
in the first case, is isomorphic to the 3-dimensional Heisenberg
group, and in the second case it has a structure of the
quaternionic Heisenberg-type group with 3-dimensional
center~\cite{CM}. In both cases we present the basis of the
horizontal distribution that is very useful in future studies of
geodesics and hypoelliptic operators related to the spherical
sub-Riemannian manifolds.

\section{$S^3$ as a sub-Riemannian manifold}\label{S3sR}

Throughout this paper $\quat$ will denote the quaternions, that
is, $\quat=(\real^4,+,\circ)$ where $+$ stands for the usual
coordinate-wise addition in $\real^4$ and $\circ$ is a non-commutative
product given by the formula

$$(x_0+x_1i+x_2j+x_3k)\circ(y_0+y_1i+y_2j+y_3k)=$$
$$=(x_0y_0-x_1y_1-x_2y_2-x_3y_3)+(x_1y_0+x_0y_1-x_3y_2+x_2y_3)i+$$
$$+(x_2y_0+x_3y_1+x_0y_2-x_1y_3)j+(x_3y_0-x_2y_1+x_1y_2+x_0y_3)k.$$

It is important to recall that $\quat$ is a non-commutative,
associative and normed real division algebra. Let $q=t+ai+bj+ck
\in\quat$, then the conjugate of $q$, is given by $$\bar q=
t-ai-bj-ck.$$ We define the norm $|q|$ of $q\in\quat$, as for
the complex numbers, by $|q|^2=q\bar q$.

Given the fact that the sphere $S^3$ can be realized as
the set of unit quaternions, it has a non abelian Lie
group structure induced by quaternion multiplication.

The multiplication rule in $\quat$ induces a
right translation $R_y(x)$ of an element $x=x_0+x_1i+x_2j+x_3k$ by
the element $y=y_0+y_1i+y_2j+y_3k$. The right invariant basis
vector fields are defined as $Y(y)=Y(0)(R_y(x))_*$, where $Y(0)$
are the basis vectors at the unity of the group. The matrix
corresponding to the tangent map $(R_y(x))_*$, obtained by the
multiplication rule, becomes

$$(R_y(x))_*=\left(\begin{array}{cccc}y_0&y_1&y_2&y_3\\
-y_1&y_0&-y_3&y_2\\-y_2&y_3&y_0&-y_1\\-y_3&-y_2&y_1&y_0\end{array}\right).$$

Calculating the action of $(R_y(x))_*$ in the basis of unit
vectors of $\real^4$ we get the four vector fields
\begin{eqnarray*}
N(y)&=&y_0\partial_{y_0}+y_1\partial_{y_1}+y_2\partial_{y_2}+y_3\partial_{y_3}\\
V(y)&=&-y_1\partial_{y_0}+y_0\partial_{y_1}-y_3\partial_{y_2}+y_2\partial_{y_3}\\
X(y)&=&-y_2\partial_{y_0}+y_3\partial_{y_1}+y_0\partial_{y_2}-y_1\partial_{y_3}\\
Y(y)&=&-y_3\partial_{y_0}-y_2\partial_{y_1}+y_1\partial_{y_2}+y_0\partial_{y_3}.
\end{eqnarray*}

It is easy to see that $N(y)$ is the unit normal to $S^3$ at $y\in
S^3$ with respect to the usual Riemannian structure $\langle\cdot,
\cdot\rangle$ in $T\,\real^4$. Moreover, for any $y\in S^3$ $$\langle
N(y),V(y)\rangle_y= \langle N(y),X(y)\rangle_y=\langle N(y),Y(y)
\rangle_y=0$$ and $$\langle N(y),N(y)\rangle_y=\langle V(y),V(y)
\rangle_y=\langle X(y),X(y)\rangle_y=\langle Y(y),Y(y)\rangle_y=1.$$

Since the matrix $$\left(\begin{array}{cccc}-y_1&y_0&-y_3&y_2\\
-y_2&y_3&y_0&-y_1\\-y_3&-y_2&y_1&y_0\end{array}\right)$$ has rank
three, we conclude that the vector fields $\{V(y),X(y),Y(y)\}$ form
an orthonormal basis of $T_yS^3$ with respect to $\langle\cdot,
\cdot\rangle_y$.

Observing that $[X,Y]=2V$, we see that the distribution $\spn
\{X,Y\}$ is bracket generating, therefore it satisfies the
hypothesis of Theorem~\ref{Chow}. The geodesics of the left
invariant sub-Riemannian structure of $S^3$ are determined
in~\cite{CMV}, while in \cite{HR} the same results are achieved by
considering the right invariant structure of $S^3$.

Notice that the distribution $\spn \{X,Y\}$ can also be defined as
the kernel of the contact one form $$\omega=-y_1\,dy_0+y_0\,
dy_1-y_3\,dy_2+y_2\,dy_3.$$

\s

\paragraph{\bf{Remark:}} It is easy to see that $[V,Y]=2X$ and
$[X,V]=2Y$, therefore the distributions $\spn\{Y,V\}$ and
$\spn\{X,V\}$ are also bracket generating. The corresponding contact forms are $$\theta=
-y_2\,dy_0+y_3\,dy_1+y_0\,dy_2-y_1\,dy_3$$ and $$\eta=
-y_3\,dy_0-y_2\,dy_1+y_1\,dy_2+y_0\,dy_3$$ respectively.
This means that there
is a priori no natural choice of a sub-Riemannian structure on
$S^3$ generated by the Lie group action of multiplication of quaternions. Any choice that can be made, will produce essentially
the same geometry.

\section{$S^3$ as a CR manifold}\label{S3CR}

Consider $S^3$ as the boundary of the unit ball $B^4$ on
$\comp^2$, that is, as the hypersurface $$S^3:=\{(z,w)\in\comp^2
:z\bar z+w\bar w= 1\}.$$ The sphere $S^3$ cannot be endowed with
a complex structure, but nevertheless it possess a differentiable
structure compatible with the natural complex structure of the ball $B^4$
as an open set in $\comp^2$. We will show that this differentiable
structure over the sphere $S^3$ (CR structure) is equivalent to the sub-Riemannian
one considered in the previous section.
We begin by recalling the definition of a CR structure, according to
\cite{BOG}.

\begin{defi}
Let $W$ be a real vector space. A linear map $J:W\to W$ is called an almost
complex structure map if $J\circ J=-I$, where $I:W\to W$ is the
identity map.
\end{defi}

In the case $W=T_p\real^{2n}$, $p=(x_1,y_1,\ldots,x_n,y_n)
\in\real^{2n}$, we say that the standard almost complex structure for $W$
is defined by setting $$J_n(\partial_{x_j})=\partial_{y_j},\quad\quad
J_n(\partial_{y_j})=-\partial_{x_j},\quad\quad 1\leq j\leq n.$$

For a smooth real submanifold $M$ of $\comp^n$ and a point $p\in M$,
in general the tangent space $T_pM$ is not invariant under the almost
complex structure map $J_n$ for $T_p(\comp^n)$. We are interested in
the largest subspace invariant under the action of $J_n$.

\begin{defi}
For a point $p\in M$, the complex or holomorphic tangent space of
$M$ at $p$ is the vector space $$H_pM=T_pM\cap J_n(T_pM).$$
\end{defi}

In this setting, the following result takes place (see \cite{BOG}).

\begin{lemma}\label{lemmaCR}
Let $M$ be a real submanifold of $\comp^n$ of real dimension $2n-d$.
Then $$2n-2d\leq\dim_\real H_pM\leq 2n-d,$$ and $\dim_\real H_pM$
is an even number.
\end{lemma}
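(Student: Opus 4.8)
The plan is to prove the two assertions of Lemma~\ref{lemmaCR} separately: first the evenness of $\dim_\real H_pM$, then the dimension bounds.

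For the evenness, the key observation is that $H_pM = T_pM \cap J_n(T_pM)$ is by construction invariant under $J_n$: indeed if $v \in H_pM$ then $v \in J_n(T_pM)$ so $J_n v \in J_n(J_n(T_pM)) = -T_pM = T_pM$, and also $v \in T_pM$ forces $J_n v \in J_n(T_pM)$; hence $J_n v \in H_pM$. Thus $(H_pM, J_n|_{H_pM})$ is a real vector space equipped with an endomorphism squaring to $-I$. Such a space carries the structure of a complex vector space (with $i$ acting as $J_n$), so its real dimension is twice its complex dimension, hence even. I would spell this out in a sentence or two, as it is the conceptual heart of the statement.

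For the dimension bounds, the upper bound $\dim_\real H_pM \le 2n-d$ is immediate since $H_pM \subseteq T_pM$ and $\dim_\real T_pM = 2n-d$. For the lower bound, I would use the standard dimension formula for the intersection of two subspaces of the $2n$-dimensional space $T_p(\comp^n)$:
\begin{equation*}
\dim_\real H_pM = \dim_\real\big(T_pM \cap J_n(T_pM)\big) = \dim_\real T_pM + \dim_\real J_n(T_pM) - \dim_\real\big(T_pM + J_n(T_pM)\big).
\end{equation*}
Since $J_n$ is a linear isomorphism, $\dim_\real J_n(T_pM) = \dim_\real T_pM = 2n-d$, and $T_pM + J_n(T_pM) \subseteq T_p(\comp^n)$ gives $\dim_\real(T_pM + J_n(T_pM)) \le 2n$. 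Combining, $\dim_\real H_pM \ge (2n-d) + (2n-d) - 2n = 2n - 2d$.

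I do not expect a serious obstacle here; the only point requiring minor care is making the invariance argument for $H_pM$ under $J_n$ fully precise (one must use $J_n^2 = -I$ to see that $J_n(T_pM)$ is itself $J_n$-invariant, or argue directly as above), and noting that the bounds are vacuous/automatically consistent with evenness only when combined — e.g.\ one should remark that the claimed interval $[2n-2d, 2n-d]$ does contain even integers, which is guaranteed since its two endpoints differ by $d$ and $2n-2d$ is even. All of this is elementary linear algebra once the $J_n$-invariance of $H_pM$ is established.
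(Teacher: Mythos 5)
Your proof is correct and complete. The paper itself gives no argument for this lemma --- it is quoted from Boggess's book \cite{BOG} --- and what you have written is precisely the standard proof found there: the $J_n$-invariance of $H_pM=T_pM\cap J_n(T_pM)$ (using $J_n^2=-I$ to see that $J_n$ maps $J_n(T_pM)$ back into $T_pM$) gives the complex structure and hence evenness, the inclusion $H_pM\subseteq T_pM$ gives the upper bound, and the intersection dimension formula together with $\dim_\real J_n(T_pM)=\dim_\real T_pM=2n-d$ and $T_pM+J_n(T_pM)\subseteq T_p(\comp^n)$ gives the lower bound. No gaps; the closing remark about the interval containing even integers is harmless but unnecessary.
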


A real submanifold $M$ of $\comp^n$ is said to have a CR structure
if $\dim_\real H_pM$ does not depend on $p\in M$. In particular,
by the previous lemma, every smooth real hypersurface
$S$ embedded in $\comp^n$ satisfies $\dim_\real M_pS=2n-2$,
therefore $S$ is a CR manifold. This fact is applied to every odd
dimensional sphere.

The question addressed now is to describe $H_pS^3$. By the
discussion in the previous paragraph, $H_pS^3$ is a complex vector
space of complex dimension one. This description can be achieved
by considering the differential form $$\omega=\bar z dz+\bar w
dw$$ and observing that $\ker\omega$ is precisely the set we are
looking for. Straightforward calculations show that
$$\ker\omega=\spn\{\bar w\partial_z-\bar z\partial_w\}.$$

In real coordinates this corresponds to $$\bar w\partial_z-\bar z
\partial_w =\frac12(-X+iY),$$ where $X$ and $Y$ were defined in
Section \ref{S3sR}. It is important to remark that this is
precisely the maximal invariant $J_2-$subspace of $T_pS^3$, namely
$$J_2(X)=Y, \quad\quad J_2(Y)=-X,$$ then
$J_2(\spn\{X,Y\})=\spn\{X,Y\}$, but $J_2(V)=-N\notin T_pS^3$ for
any point $p\in S^3$. Therefore, the distribution corresponding to
the right invariant action of $S^3$ over itself is the same to its
one dimensional (complex) CR structure.

\s

\paragraph{\bf{Remark:}} The distribution associated to the
anti-holomorphic form $$\bar\omega=zd\bar z+wd\bar
w$$ is the same as the previous one. More explicitly: $$\ker
\omega=\spn\{-w\partial_{\bar z}+z\partial_{\bar w}\}$$ and in
real coordinates this corresponds to $$-w\partial_{\bar z}+z
\partial_{\bar w}=\frac12(X+iY).$$

\s

The same almost complex structure as the previously described can
be obtained by means of the covariant derivative of $S^3$
considered as a smooth Riemannian manifold embedded in $\mathbb
R^4$. Namely, in \cite{HR} it is introduced the mapping
$J(X)=D_XV$, were $D$ denotes the Levi-Civit\'a connection on
tangent bundle to $S^3$ and $V$ is the vector field defined in
Section~\ref{S3sR}.

\section{$S^3$ as principal bundle}\label{S3pb}

In this section we describe how the structure of a principal
$S^1-$bundle over $S^3$ induces a bracket generating distribution
on $S^3$. Namely, it is possible to consider $S^3$ as a $S^1-$space,
according to the action $$\lambda\cdot(z,w)=(\lambda z,\lambda w),$$ where $\lambda\in S^1=\{v\in\comp:|v|^2=1\}$ and $(z,w)\in
S^3=\{(z,w)\in\comp^2:|z|^2+|w|^2=1\}$.

Consider the Hopf map $h:S^3\to S^2$ as a principal
$S^1-$bundle, see~\cite{H}, given explicitly by $$h(z,w)=(|z|^2-|w|^2,2z\bar
w),$$ where $S^2=\{(x,\zeta)\in\real\times\comp:x^2+|\zeta|^2=
1\}$. Clearly, $h$ is a submersion of $S^3$ onto $S^2$, and it is
a bijection between $S^3/S^1$ and $S^2$, where $S^3/S^1$ is
understood as the orbit space of the $S^1-$action over $S^3$,
previously defined.

Let $p=(x_0,\zeta_0)\in S^2$. It is easy to verify that
$h^{-1}(p)= (z_0,w_0)$ mod $S^1$, where $(z_0,w_0)$ is one
preimage of $p$ under $h$. Consider the great
circle $$\gamma_p(t)=e^{2\pi it} (z_0,w_0),\quad\quad
t\in[0,1],$$ in $S^3$, that projects to $p$ under the Hopf map.
Now consider the following vector field associated to
$\gamma_p$, defined by the tangent vectors $$\dot
\gamma_p(t)=2\pi ie^{2\pi it}(z_0,w_0)\in
T_{\gamma_p(t)}S^3.$$

We write the Hopf map in real coordinates, where, in particular,
$\gamma_p(t)=(z(t),w(t))=(x_0(t)+ix_1(t),x_2(t)+ix_3(t))=
(x_0(t),x_1(t),x_2(t),x_3(t))$. It is easy to see that
$$[d_{\gamma_p(t)}h]=2\left(\begin{array}{cccc}
x_0(t)&x_1(t)&-x_2(t)&-x_3(t)\\x_2(t)&x_3(t)&x_0(t)&x_1(t)\\
-x_3(t)&x_2(t)&x_1(t)&-x_0(t)\end{array}\right).$$ Thus, the
Hopf map induces the following action over the vector field
$\dot\gamma_p(t)$: $$[d_{\gamma_p(t)}h]\dot\gamma_p(t)
=4\pi\left(\!\!\begin{array}{rrrr}x_0(t)&x_1(t)&-x_2(t)&-x_3(t)\\
x_2(t)&x_3(t)&x_0(t)&x_1(t)\\-x_3(t)&x_2(t)&x_1(t)&-x_0(t)
\end{array}\!\!\right)\!\!\!\left(\!\!\begin{array}{c}\dot x_0(t)\\
\dot x_1(t)\\\dot x_2(t)\\\dot x_3(t)\end{array}\!\!\right)=
\left(\!\!\begin{array}{c}0\\0\\0\\0\end{array}\!\!\right).$$
Therefore, if $[d_{\gamma_p(t)}h]$ is a full rank matrix, we would have
characterized the kernel of it, by \begin{equation}\label{kerhopf1}\ker d_{\gamma_p(t)}h=
\spn\{\dot\gamma_p(t)\}.\end{equation} Notice that, using the notation of
Section \ref{S3sR}, the following identity holds
\begin{eqnarray}\label{gammaV}
\dot\gamma_p(t)=2\pi V(\gamma_p(t)).
\end{eqnarray}
There is a simple way to see that the
matrix $[d_{\gamma_p(t)}h]$ is full rank. Denote by $D_i$ the
$3\times 3$ matrix obtained by deleting the $i-$th column of
$[d_{\gamma_p(t)}h]$, $i=1,2,3$ or $4$, then $$\det(D_1)^2+
\det(D_2)^2+\det(D_3)^2+\det(D_4)^2=$$ $$=\left((x_0(t))^2+
(x_1(t))^2+(x_2(t))^2+(x_3(t))^2\right)^3=1$$ implies that
$[d_{\gamma_p(t)}h]$ is full rank.

Before describing how the Hopf map induces a horizontal
distribution, it is necessary to present some definitions.

\begin{defi}[Ehresmann Connection~\cite{M}, chapter 11]\label{EC}
Let $M$ and $Q$ be two differentiable manifolds, and let $\pi:Q\to
M$ be a submersion. Denoting by $Q_m=\pi^{-1}(m)$ the fiber
through $m\in M$, the {\em vertical space} at $q$ is the tangent
space at the fiber $Q_{\pi(q)}$ and it is denoted by $V_q$.

An {\em Ehresmann connection} for the submersion $\pi:Q\to M$ is a
distribution $\dist\subset TQ$ which is everywhere transversal to
the vertical, that is: $$V_q\oplus\dist_q=T_qQ.$$
\end{defi}

We apply Definition~\ref{EC} to $S^3$ in order to define the
Ehresmann connection. Since we know that $\ker d_p
h=\spn\{V(p)\}$, for every $p\in S^3$ by~\eqref{kerhopf1}
and~\eqref{gammaV}, and moreover, $$\langle
X(p),V(p)\rangle_p=\langle Y(p),V(p)\rangle_p=\langle
X(p),Y(p)\rangle_p=0$$ where $\langle\cdot,\cdot\rangle_p$ stands
for the usual Riemannian structure defined at $p\in S^3$, we see
that
\begin{eqnarray}\label{HS3}
\dist_p=\spn\{X(p),Y(p)\}
\end{eqnarray}
is an Ehresmann connection for the submersion $h:S^3\to S^2$ with
$V(p)$ as a vertical space.

\begin{defi}
 Let $G$ be a Lie group acting on $Q$ and $\pi:Q\to M$ a
submersion, with Ehresmann connection ${\mathcal{H}}$, which is a
fiber bundle with fiber~$G$. The submersion $\pi$ is called a
principal $G-$bundle with connection, if the following
conditions hold:

\begin{itemize}
\item $G$ acts freely and transitively,

\item the group orbits are the fibers of $\pi:Q\to M$ (thus $M$ is
isomorphic to $Q/G$ and $\pi$ is the canonical projection) and

\item the $G-$action on $Q$ preserves the horizontal distribution
${\mathcal{H}}$;
\end{itemize}

\end{defi}

We conclude that the Hopf fibration is a principal $S^1-$bundle
with connection $\dist$, defined in~\eqref{HS3}.

\begin{defi}
A sub-Riemannian metric $(\dist,\langle\cdot,\cdot\rangle)$ on
the principal $G$-bundle $\pi:G\to M$ is called a metric of bundle
type if the inner product $\langle\cdot,\cdot\rangle$ on the
horizontal distribution $\dist$ is induced from a Riemannian metric
on $M$.
\end{defi}

The sub-Riemannian metric $\langle\cdot,\cdot\rangle|_\dist$,
obtained by restricting the usual Riemannian metric of $S^3$
to the distribution $\dist$ is, by construction, a metric of
bundle type.

Thus, the Hopf map indicates in a very natural topological way how
to make a natural choice of the horizontal distribution $\dist$ that was not obvious when we considered the right action of $S^3$ over itself.

\paragraph{\bf{Remark:}} Observe that the considered vector fields
coincide with the right invariant vector fields. This phenomenon does not appear when
we change the right action to the left action of $S^3$ over itself.

\section{Tangent vector fields for $S^7$}

In Sections~5 - 7 we study different sub-Riemannian structures
over the sphere $S^7$, making use of ideas of Sections 2 - 4. As a
result, we obtain two principally different types of horizontal
distributions. One of them of rank $6$ and other of the rank $4$.
Moreover, as we shall see, the sub-Riemannian structure induced by
the CR-structure and quaternionic analogue of the Hopf map are
essentially different. We start from the construction of the
tangent vector fields to $S^7$.

The multiplication of unit octonions is not associative, therefore $S^7$ is not a group in a contrast with $S^3$. Nevertheless, we still able to use the multiplication law in order to fugue out global tangent vector fields. To do this, we present a multiplication table for the basis vectors of
$\real^8$. The non-associative multiplication gives rise to the
division algebra of the octonions $$\oct=\spn\{
e_0,e_1,e_2,e_3,e_4,e_5,e_6,e_7\}.$$

\begin{table}[h]
$$\begin{array}{c||c|c|c|c|c|c|c|c}
 &e_0&e_1&e_2&e_3&e_4&e_5&e_6&e_7\\\hline\hline
 e_0&e_0&e_1&e_2&e_3&e_4&e_5&e_6&e_7\\\hline
 e_1&e_1&-e_0&e_3&-e_2&e_5&-e_4&-e_7&e_6\\\hline
 e_2&e_2&-e_3&-e_0&e_1&e_6&e_7&-e_4&-e_5\\\hline
 e_3&e_3&e_2&-e_1&-e_0&e_7&-e_6&e_5&-e_4\\\hline
 e_4&e_4&-e_5&-e_6&-e_7&-e_0&e_1&e_2&e_3\\\hline
 e_5&e_5&e_4&-e_7&e_6&-e_1&-e_0&-e_3&e_2\\\hline
 e_6&e_6&e_7&e_4&-e_5&-e_2&e_3&-e_0&-e_1\\\hline
 e_7&e_7&-e_6&e_5&e_4&-e_3&-e_2&e_1&-e_0\\\hline
\end{array}
$$\caption{Multiplication table for the basis of $\oct$.}\label{oct}
\end{table}

According to the Table \ref{oct}, the formula for the product of
two octonions is presented in the Appendix, subsection \ref{multoct}.
This multiplication rule induces a matrix representation of the
right action of octonion multiplication, given explicitly by:

$$R_*=\left(\begin{array}{cccccccc}
y_0&-y_1&-y_2&-y_3&-y_4&-y_5&-y_6&-y_7\\
y_1&y_0&y_3&-y_2&y_5&-y_4&-y_7&y_6\\
y_2&-y_3&y_0&y_1&y_6&y_7&-y_4&-y_5\\
y_3&y_2&-y_1&y_0&y_7&-y_6&y_5&-y_4\\
y_4&-y_5&-y_6&-y_7&y_0&y_1&y_2&y_3\\
y_5&y_4&-y_7&y_6&-y_1&y_0&-y_3&y_2\\
y_6&y_7&y_4&-y_5&-y_2&y_3&y_0&-y_1\\
y_7&-y_6&y_5&y_4&-y_3&-y_2&y_1&y_0
\end{array}\right).
$$

We are able to find globally defined tangent vector fields which
are invariant under this action. We proceed by the analogy with
Section~2. The explicit formulae are given in
subsection~\ref{rivf} of the Appendix. The vector fields $\{Y_0,
\ldots,Y_7\}$ form a frame. More explicitly, we have that the
following identity holds $$\langle Y_i(y),Y_j(y)\rangle_y=
\delta_{ij},\quad\quad y\in S^7,\quad\quad i,j\in
\{0,1,\ldots,7\},$$ where $\langle\cdot,\cdot\rangle$ is the
standard Riemannian structure over $\real^8$.

\section{$CR$-structure and the Hopf map on $S^7$}

In the book~\cite{M} it is briefly discussed the general idea of
studying a sub-Riemannian geometry for odd dimensional spheres via
the higher Hopf fibrations. Namely, consider
$S^{2n+1}=\{z\in\comp^{n+1}:\|z\|^2=1\}$, then the $S^1-$action on
$S^{2n+1}$ given by
$$\lambda\cdot(z_0,\ldots,z_n)=(\lambda z_0,\ldots,\lambda z_n),$$
for $\lambda\in S^1$ and $(z_0,\ldots,z_n)\in S^{2n+1}$, induces
the well-known principal $S^1-$bundle $S^1\to S^{2n+1}\to\comp
P^n$ given explicitly by
$$S^{2n+1}\ni(z_0,\ldots,z_n)\mapsto[z_0:\ldots:z_n]\in\comp P^n$$
where $[z_0:\ldots:z_n]$ denote homogeneous coordinates. This map
is called higher Hopf fibration. The kernel of the map $h\colon
S^{2n+1}\to\comp P^n$ produces the vertical space and a
transversal to the vertical space distribution gives the Ehresmann
connection. We show that the vertical space is always given by an
action of standard almost complex structure on the normal vector
field to $S^{2n+1}$, and the Ehresmann connection coincides with
the holomorphic tangent space at each point of $S^{2n+1}$.

Theorem \ref{Adams} asserts that any odd dimensional sphere has at
least one globally defined non vanishing tangent vector field. If
the dimension of the sphere is of the form $4n+1$, then it has
only one globally defined non vanishing tangent vector field. In
the case that the dimension of the sphere is of the form $4n+3$,
then the sphere admits more than one vector field. The sphere
$S^{2n+1}$ possess the vector field
$$V_{n+1}(y)=-y_1\partial_{y_0}+y_0\partial_{y_1}-y_3\partial_{y_2}+\ldots-y_{2n+2}\partial_{y_{2n+1}}+y_{2n+1}\partial_{y_{2n+2}}.$$
Observe that this vector field has appeared already in two
opportunities: the vector field $V$ in Sections \ref{S3sR},
\ref{S3CR} and \ref{S3pb} corresponds to $V_2$; and the vector
field $Y_1$ in Subsection~\ref{rivf} of the Appendix corresponds
to $V_4$.

The vector field $V_{n+1}$ has the remarkable property that it
encloses valuable information concerning the CR structure of
$S^{2n+1}$. We know by Lemma \ref{lemmaCR} that, as a smooth
hypersurface in $\comp^{n+1}$ the sphere $S^{2n+1}$ admits a
holomorphic tangent space of dimension $$\dim_\real
H_pS^{2n+1}=2n$$ for any point $p\in S^{2n+1}$. The following
lemma implies the description of $H_pS^{2n+1}$ as the orthogonal
complement to $V_{n+1}$.

\begin{lemma}\label{alglin}
Let $W$ be an Euclidean space of dimension $n+2$, $n\geq1$, and
inner product $\langle\cdot,\cdot\rangle_W$. Consider an
orthogonal decomposition $W=\spn\{X,Y\}\oplus_\bot\tilde W$ with
respect to $\langle\cdot,\cdot\rangle_W$ and an orthogonal
endomorphism $A:W\to W$ such that $$A(\spn\{X,Y\})=\spn\{X,Y\},$$
then $\tilde W$ is an invariant space under the action of $A$,
i.e. $$A(\tilde W)=\tilde W.$$
\end{lemma}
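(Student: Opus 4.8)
The plan is to exploit the fact that an orthogonal endomorphism preserves orthogonality of subspaces: once $A$ maps a subspace to itself, it must also preserve that subspace's orthogonal complement. First I would recall that for any linear subspace $U\subseteq W$, if $A$ is orthogonal and $A(U)=U$, then $A(U^\perp)\subseteq U^\perp$. Indeed, take $v\in U^\perp$; for any $u\in U$ we may write $u=A(u')$ with $u'\in U$ (using $A(U)=U$, so $A$ restricted to $U$ is surjective), and then $\langle A(v),u\rangle_W=\langle A(v),A(u')\rangle_W=\langle v,u'\rangle_W=0$ because $u'\in U$ and $v\in U^\perp$. Hence $A(v)\in U^\perp$, giving $A(U^\perp)\subseteq U^\perp$.

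Next I would upgrade this inclusion to an equality. Since $W$ is finite-dimensional and $A$ is orthogonal, $A$ is an isomorphism; restricting to $U^\perp$ we get that $A|_{U^\perp}:U^\perp\to U^\perp$ is injective, and by finite-dimensionality injective implies surjective, so $A(U^\perp)=U^\perp$. Applying this with $U=\spn\{X,Y\}$ and $U^\perp=\tilde W$ — which is exactly the orthogonal complement of $\spn\{X,Y\}$ by the hypothesis $W=\spn\{X,Y\}\oplus_\bot\tilde W$ — yields $A(\tilde W)=\tilde W$, as claimed.

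The argument is essentially routine linear algebra; the only point requiring a little care is making sure we use the surjectivity of $A|_U$ (not just $A(U)\subseteq U$) when checking $\langle A(v),u\rangle_W=0$ for \emph{all} $u\in U$, and similarly invoking finite-dimensionality to pass from the inclusion $A(\tilde W)\subseteq\tilde W$ to the equality. No genuine obstacle is expected. I would then remark, as the text anticipates, that the lemma applies with $W=T_pS^{2n+1}$ (which has dimension $n+2$ in the relevant reformulation, here one takes the tangent space to the sphere viewed appropriately), $\spn\{X,Y\}$ a suitable $J$-invariant plane, $A=J_{n+1}$ restricted to $T_pS^{2n+1}$, and $\tilde W=H_pS^{2n+1}$, to conclude that the holomorphic tangent space is the orthogonal complement of $V_{n+1}$.
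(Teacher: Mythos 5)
Your proof is correct and follows essentially the same route as the paper: both arguments show $\langle Av,u\rangle_W=0$ for $v\in\tilde W$ and $u\in\spn\{X,Y\}$ by transferring $A$ to the other slot of the inner product (you via $u=Au'$ and $\langle Av,Au'\rangle_W=\langle v,u'\rangle_W$, the paper via $A^t=A^{-1}$ and the invariance of $\spn\{X,Y\}$ under $A^{-1}$), which are two phrasings of the same use of orthogonality. Your extra step upgrading the inclusion $A(\tilde W)\subseteq\tilde W$ to equality by finite-dimensionality is a small point the paper leaves implicit.
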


\begin{proof}
Let $v\in\tilde W$, then for any $\alpha,\beta\in\real$ it is
clear that $$\langle Av,\alpha X+\beta Y\rangle_W=\langle
v,A^t(\alpha X+\beta Y)\rangle_W=\langle v,A^{-1}(\alpha X+\beta
Y)\rangle_W.$$

Since $A(\spn\{X,Y\})=\spn\{X,Y\}$, there exist real numbers $a,b$
such that $A^{-1}(\alpha X+\beta Y)=aX+bY$, and therefore
$$\langle Av,\alpha X+\beta Y\rangle_W=\langle
v,aX+bY\rangle_W=0$$ which implies that $Av\in\tilde W$.
\end{proof}

As an application of Lemma~\ref{alglin}, it is possible to obtain
an explicit characterization of $H_pS^{2n+1}$.

\begin{lemma}\label{hsphere}
For any $p\in S^{2n+1}$, the vector space $H_pS^{2n+1}$ is the
orthogonal complement to the vector $V_{n+1}(p)$ in $T_pS^{2n+1}$.
\end{lemma}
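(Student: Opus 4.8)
The plan is to apply Lemma~\ref{alglin} with $W = T_p\real^{2n+2} = T_p\comp^{n+1}$, which has real dimension $n+2$ replaced by $2n+2$; more precisely we take $W = T_pS^{2n+1}$ together with the normal line, so $\dim_\real W = 2n+2$. First I would fix $p\in S^{2n+1}$ and recall that the normal vector field to $S^{2n+1}$ at $p$ is $N(p) = \sum_j y_j\partial_{y_j}$ (the analogue of $N$ from Section~\ref{S3sR}), and that $V_{n+1}(p) = J_{n+1}(N(p))$ by the explicit formula for $V_{n+1}$ and the definition of $J_{n+1}$. Since $J_{n+1}$ is orthogonal (indeed $J_{n+1}^t = J_{n+1}^{-1} = -J_{n+1}$) and $J_{n+1}\circ J_{n+1} = -I$, the two-dimensional subspace $\spn\{N(p),V_{n+1}(p)\} = \spn\{N(p), J_{n+1}N(p)\}$ is invariant under $J_{n+1}$.

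Next I would set $A = J_{n+1}$, which is an orthogonal endomorphism of $W = T_p\real^{2n+2}$, and $X = N(p)$, $Y = V_{n+1}(p)$, so that $A(\spn\{X,Y\}) = \spn\{X,Y\}$. Let $\tilde W$ be the orthogonal complement of $\spn\{N(p),V_{n+1}(p)\}$ inside $T_p\real^{2n+2}$; this is exactly the orthogonal complement of $V_{n+1}(p)$ inside $T_pS^{2n+1}$, since $T_pS^{2n+1} = N(p)^\perp$ and $V_{n+1}(p)\in T_pS^{2n+1}$. By Lemma~\ref{alglin}, $A(\tilde W) = \tilde W$, i.e. $J_{n+1}(\tilde W) = \tilde W$. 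Therefore $\tilde W \subseteq T_pS^{2n+1}\cap J_{n+1}(T_pS^{2n+1}) = H_pS^{2n+1}$: indeed $\tilde W \subseteq T_pS^{2n+1}$ by construction, and $\tilde W = J_{n+1}(\tilde W) \subseteq J_{n+1}(T_pS^{2n+1})$.

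Finally I would check the dimension count to conclude equality rather than mere inclusion. We have $\dim_\real \tilde W = (2n+2) - 2 = 2n$, while Lemma~\ref{lemmaCR} (applied to the hypersurface $S^{2n+1}\subset\comp^{n+1}$, where $d=1$) gives $\dim_\real H_pS^{2n+1} = 2n$. Since $\tilde W\subseteq H_pS^{2n+1}$ and both have real dimension $2n$, they coincide, which is precisely the assertion that $H_pS^{2n+1}$ is the orthogonal complement of $V_{n+1}(p)$ in $T_pS^{2n+1}$.

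I do not expect a serious obstacle here; the only point requiring a little care is the bookkeeping of ambient versus tangent spaces — making sure that "orthogonal complement of $\{N, V_{n+1}\}$ in $\real^{2n+2}$'' and "orthogonal complement of $V_{n+1}$ in $T_pS^{2n+1}$'' are literally the same subspace, and that Lemma~\ref{alglin} is being invoked with $W$ of the correct dimension (its statement has $\dim W = n+2$, so one reads it with the parameter specialized so that $n+2 = 2n+2$, i.e. the lemma's "$n$'' is our "$2n$''). Once the identification $V_{n+1}(p) = J_{n+1}N(p)$ is noted, everything else is a direct application of the already-proven linear-algebra lemma together with the dimension bound from Lemma~\ref{lemmaCR}.
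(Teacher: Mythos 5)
Your proposal is correct and follows essentially the same route as the paper: identify $V_{n+1}(p)=J_{n+1}(N(p))$, apply Lemma~\ref{alglin} with $A=J_{n+1}$ acting on $T_p\real^{2n+2}=\spn\{N(p)\}\oplus_\bot T_pS^{2n+1}$ to get $J_{n+1}$-invariance of the complement $\tilde W$, and conclude by the dimension count from Lemma~\ref{lemmaCR}. You merely spell out the inclusion $\tilde W\subseteq T_pS^{2n+1}\cap J_{n+1}(T_pS^{2n+1})$ more explicitly than the paper does, which is a welcome clarification but not a different argument.
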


\begin{proof}
Consider the vector space $$W_p=\spn\{N_{n+1}(p)\}\oplus_{\bot}
T_pS^{2n+1}\cong T_p\real^{2n+2},$$ where $N_{n+1}(p)$ is the
normal vector to $S^{2n+1}$ at $p$. The standard almost complex
structure map $J_{n+1}:W_p\to W_p$ is clearly orthogonal. Moreover
$$J_{n+1}(V_{n+1}(p))=-N_{n+1}(p),\quad
J_{n+1}(N_{n+1}(p))=V_{n+1}(p).$$ Using the decomposition
$W_p=\tilde W_p\oplus_\bot\spn\{V_{n+1}(p),N_{n+1}(p)\}$ it is
possible to apply Lemma~\ref{alglin} in order to conclude that
$\tilde W_p$, which is the orthogonal complement to $V_{n+1}(p)$
in $T_pS^{2n+1}$, is invariant under $J_{n+1}$. Since
$\dim_\real\tilde W_p=2n$, we conclude that $\tilde
W_p=H_pS^{2n+1}$.
\end{proof}

\paragraph{\bf{Remark:}} The space $HS^{2n+1}$ can also be described
as the kernel of one form $$\omega=\bar z_0dz_0+\ldots+ \bar
z_ndz_n.$$ Indeed, take $X\in HS^{2n+1}$, then by straightforward
calculations we have
$$\omega(X)=\langle X,N_{n+1}\rangle+i\langle X,V_{n+1}\rangle=0.$$

\smallskip

Lemma~\ref{hsphere} provides a horizontal distribution of rank
$2n$ for the spheres $S^{2n+1}$, by considering the holomorphic
tangent space. The question now is whether this distribution is
bracket generating. The bracket generating condition for $S^3$ is
already discussed in Section~\ref{S3sR}. Here we present a basis
for bracket generating distribution of rank 6 for $S^7$. The
following ideas were introduced to the authors by Prof. K.
Furutani in a private communication.

\begin{theo}\label{rk6st2}
The subbundle $\dist=\spn\{Y_2,\ldots,Y_7\}$ of $TS^7$ is a bracket generating distribution of rank 6 and step 2.
\end{theo}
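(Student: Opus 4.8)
The plan is to use the explicit right-invariant frame $\{Y_0,\ldots,Y_7\}$ on $S^7$ (given in the Appendix) and simply compute enough Lie brackets among $Y_2,\ldots,Y_7$ to recover the missing directions $Y_0$ and $Y_1$ (equivalently $N_4$ and $V_4$). Since $\dist$ has rank $6$ and $T_pS^7$ has dimension $7$, the only thing missing at each point is the one-dimensional vertical direction $\spn\{Y_1\}=\spn\{V_4\}$; the field $Y_0$ is the normal and is irrelevant on $S^7$. So bracket generating of step $2$ amounts to the single statement that there exist $i,j\in\{2,\ldots,7\}$ with $[Y_i,Y_j]$ having a nonzero $Y_1$-component everywhere on $S^7$, and that $\dim\dist^2_p=7$ is constant. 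First I would recall from Lemma~\ref{hsphere} (and the Remark following it) that $\dist_p=H_pS^7$ is exactly the orthogonal complement of $V_4(p)$ inside $T_pS^7$, i.e. the kernel of $\omega=\bar z_0dz_0+\bar z_1dz_1+\bar z_2dz_2+\bar z_3dz_3$ restricted to $S^7$; this identifies $\dist$ intrinsically and makes clear that step $2$ is the best one can hope for.

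The key computational step is to evaluate the brackets $[Y_i,Y_j]$ for the relevant indices. Because each $Y_k$ is a linear vector field on $\real^8$ (its coefficients are linear in $y$), each bracket $[Y_i,Y_j]$ is again a linear vector field, and in fact — by the same mechanism as the identity $[X,Y]=2V$ on $S^3$ — one expects relations of the shape $[Y_i,Y_j]=\pm 2Y_k$ for suitable $k$, mirroring the octonion multiplication table (Table~\ref{oct}). The natural candidates are the pairs whose product in $\oct$ lands on $\pm e_1$: from the table, $e_2\circ e_3=e_1$, $e_4\circ e_5=e_1$, $e_7\circ e_6=e_1$, so I would first compute $[Y_2,Y_3]$ and expect it to equal $2Y_1$ (up to sign), which already gives $Y_1\in\dist^2_p$ for every $p$. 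Carrying out this one bracket explicitly from the formulas in subsection~\ref{rivf} of the Appendix is routine matrix-coefficient bookkeeping; I would present just that single computation. Then at every $p\in S^7$ we have $\dist^2_p\supseteq\spn\{Y_1(p),\ldots,Y_7(p)\}=T_pS^7$, so the flag stabilizes at $r=2$, the dimensions $\dim\dist^1_p=6$ and $\dim\dist^2_p=7$ are independent of $p$ (regularity), and by Theorem~\ref{Chow} the distribution is bracket generating of step exactly $2$ (step $1$ is excluded since $\operatorname{rank}\dist=6<7$).

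The main obstacle is not conceptual but organizational: one must work with the eight-dimensional frame and make sure the chosen bracket indeed has a nonvanishing $Y_1$-component \emph{uniformly} on $S^7$ rather than only generically — i.e. that the coefficient multiplying $\partial_{y}$-directions transverse to $\dist$ is a nonzero constant (here, $\pm2$) and not something like a coordinate function that could vanish somewhere. This is guaranteed by the right-invariance of the frame: since the $Y_k$ are push-forwards of a fixed basis at the identity under right translations, any bracket $[Y_i,Y_j]$ is the right-invariant extension of the Lie bracket of the corresponding tangent vectors at $e_0$, hence is itself a constant-coefficient combination of the $Y_k$'s globally. Making this right-invariance argument precise (or, alternatively, just exhibiting the constant $2$ in the explicit formula) is the heart of the proof; everything else is formal.
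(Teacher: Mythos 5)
Your overall framing is reasonable --- $\dist=\spn\{Y_2,\ldots,Y_7\}$ is the orthogonal complement of $Y_1=V_4$ in $TS^7$, so everything reduces to producing $Y_1$ (modulo $\dist$) from a single layer of brackets at \emph{every} point --- but the computational core of your argument fails, and it fails for exactly the reason you flagged and then dismissed. The octonions are not associative, so $S^7$ is not a Lie group, the fields $Y_0,\ldots,Y_7$ are not invariant fields of a group action, and they do \emph{not} close under Lie bracket. Your claim that any bracket $[Y_i,Y_j]$ is ``the right-invariant extension of the Lie bracket of the corresponding tangent vectors at $e_0$, hence a constant-coefficient combination of the $Y_k$'s'' is false; the paper states explicitly that no commutator $[Y_i,Y_j]$ coincides with any $Y_k$. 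Concretely, from the Appendix,
$$\tfrac12[Y_2,Y_3]=Y_{23}(y)=y_1\partial_{y_0}-y_0\partial_{y_1}+y_3\partial_{y_2}-y_2\partial_{y_3}-y_5\partial_{y_4}+y_4\partial_{y_5}-y_7\partial_{y_6}+y_6\partial_{y_7},$$
whose component along $Y_1$ is
$$\langle Y_{23},Y_1\rangle=(y_4^2+y_5^2+y_6^2+y_7^2)-(y_0^2+y_1^2+y_2^2+y_3^2),$$
and this vanishes identically on the set $\{y_0^2+\cdots+y_3^2=y_4^2+\cdots+y_7^2=1/2\}\subset S^7$. The same degeneration occurs for $[Y_4,Y_5]$ and $[Y_6,Y_7]$ (and for the obvious linear combinations of these three), so the single bracket you propose to exhibit does not give $Y_1$ uniformly, and your ``guarantee'' that the transverse coefficient is the constant $\pm2$ is precisely what breaks down.

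The theorem is nevertheless true, and the paper's proof circumvents the obstruction by a device absent from your proposal: it splits $Y_4=v_{41}+v_{42}$ and $Y_5=v_{51}+v_{52}$ into pieces each supported on a quaternionic block of coordinates, checks that all four pieces are orthogonal to $Y_0$ and $Y_1$ (hence are sections of $\dist$), and then verifies the exact constant-coefficient identity $[v_{41},v_{51}]+[v_{42},v_{52}]=-2Y_1$. To repair your argument you would need either this decomposition or some other combination of brackets of sections of $\dist$ whose $Y_1$-component is bounded away from zero on all of $S^7$; computing $[Y_2,Y_3]$ alone, or appealing to invariance, does not accomplish this.
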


\begin{proof}
Define the following vector fields
\begin{eqnarray*}
v_{41}(y)&=&-y_4\partial_{y_0}+y_5\partial_{y_1}+y_0\partial_{y_4}-y_1\partial_{y_5},\\
v_{42}(y)&=& y_6\partial_{y_2}-y_7\partial_{y_3}-y_2\partial_{y_6}+y_3\partial_{y_7},\\
v_{51}(y)&=&-y_5\partial_{y_0}-y_4\partial_{y_1}+y_1\partial_{y_4}+y_0\partial_{y_5},\\
v_{52}(y)&=&-y_7\partial_{y_2}-y_6\partial_{y_3}+y_3\partial_{y_6}+y_0\partial_{y_7},
\end{eqnarray*}
and observe that $v_{41}+v_{42}=Y_4$ and $v_{51}+v_{52}=Y_5$. By straightforward calculations we see that
$$\langle v_{41}(y),Y_0(y)\rangle_y=\langle v_{42}(y),Y_0(y)\rangle_y=\langle v_{51}(y),Y_0(y)\rangle_y=\langle v_{52}(y),Y_0(y) \rangle_y=0$$ $$\langle v_{41}(y),Y_1(y)\rangle_y=\langle v_{42}(y),Y_1(y)\rangle_y=\langle v_{51}(y),Y_1(y)\rangle_y=\langle v_{52}(y),Y_1(y)\rangle_y=0$$ which implies that $v_{41},v_{42},v_{51},v_{52}\in \spn\{Y_2,\ldots,Y_7\}$. The following commutation relation $$[v_{41},v_{51}]+[v_{42},v_{52}]=-2Y_1$$ implies that the distribution $\dist$ is bracket generating of step 2.
\end{proof}

\paragraph{\bf{Remark:}} It is possible to repeat the previous argument with other pairs of vector fields. For example, if instead of $Y_4$ and $Y_5$ we employ $Y_2$ and $Y_3$, we can consider the vector fields
\begin{eqnarray*}
v_{21}(y)&=&-y_4\partial_{y_0}+y_5\partial_{y_1}+y_0\partial_{y_4}-y_1\partial_{y_5},\\
v_{22}(y)&=& y_6\partial_{y_2}-y_7\partial_{y_3}-y_2\partial_{y_6}+y_3\partial_{y_7},\\
v_{31}(y)&=&-y_5\partial_{y_0}-y_4\partial_{y_1}+y_1\partial_{y_4}+y_0\partial_{y_5},\\
v_{32}(y)&=&-y_7\partial_{y_2}-y_6\partial_{y_3}+y_3\partial_{y_6}+y_0\partial_{y_7}.
\end{eqnarray*}
We can proceed in a similar way if we use $Y_6$ and $Y_7$.

\s

We conclude this section by proving that the line bundle
$\spn\{V_{n+1}\}$ is the vertical space for the submersion given
by the Hopf fibration $S^1\hookrightarrow S^7 \buildrel
{h}\over\longrightarrow \comp P^n$. This implies that the
distribution $\dist$ defined in Theorem \ref{rk6st2} is an
Ehresmann connection for $h$. To achieve this, we recall that the
charts defining the holomorphic structure of $\comp P^n$ are given
by the open sets
$$U_k=\{[z_0:\ldots:z_n]:z_k\neq0\},$$ together with the
homeomorphisms
$$\begin{array}{ccccc}\varphi_k&:&U_k&\to&\comp^n\\
&&[z_0:\ldots:z_n]&\mapsto&(\frac{z_0}{z_k},\ldots,\frac{z_{k-1}}{z_k},\frac{z_{k+1}}{z_k},\ldots,\frac{z_n}{z_k}).\end{array}$$
Then, without loss of generality we will assume that $n=3$ and we
will develop the explicit calculations for $k=0$. The other cases
can be treated similarly.

Using the chart $(U_0,\varphi_0)$ defined above, we have the map
$$\begin{array}{ccccc}\varphi_0\circ h&\colon &S^{7}&\to&\comp^3\\
&&(z_0,z_1,z_2,z_3)&\mapsto&(\frac{z_1}{z_0},\frac{z_2}{z_0},\frac{z_3}{z_0}),\end{array}$$
which in real coordinates can be written as
$$\mbox{{$\varphi_0\circ
h(x_0,\ldots,x_7)=\left(\frac{x_0x_2+x_1x_3}{x_0^2+x_1^2},\frac{x_0x_3-x_1x_2}{x_0^2+x_1^2},
\frac{x_0x_4+x_1x_5}{x_0^2+x_1^2},\frac{x_0x_5-x_1x_4}{x_0^2+x_1^2},\frac{x_0x_6+x_1x_7}{x_0^2+x_1^2},\frac{x_0x_7-x_1x_6}{x_0^2+x_1^2}\right).$}}$$
The differential of this mapping is given by the matrix
$$d(\varphi_0\circ h)=$$ $$\mbox{\footnotesize{$\left(\begin{array}{cccccccc}
\frac{(x_1^2-x_0^2)x_2-2x_0x_1x_3}{(x_0^2+x_1^2)^2}&\frac{(x_0^2-x_1^2)x_3-2x_0x_1x_2}{(x_0^2+x_1^2)^2}&\frac{x_0}{x_0^2+x_1^2}&\frac{x_1}{x_0^2+x_1^2}&0&0&0&0\\
\frac{(x_1^2-x_0^2)x_3+2x_0x_1x_2}{(x_0^2+x_1^2)^2}&\frac{(x_1^2-x_0^2)x_2-2x_0x_1x_3}{(x_0^2+x_1^2)^2}&-\frac{x_1}{x_0^2+x_1^2}&\frac{x_0}{x_0^2+x_1^2}&0&0&0&0\\
\frac{(x_1^2-x_0^2)x_4-2x_0x_1x_5}{(x_0^2+x_1^2)^2}&\frac{(x_0^2-x_1^2)x_5-2x_0x_1x_4}{(x_0^2+x_1^2)^2}&0&0&\frac{x_0}{x_0^2+x_1^2}&\frac{x_1}{x_0^2+x_1^2}&0&0\\
\frac{(x_1^2-x_0^2)x_5+2x_0x_1x_4}{(x_0^2+x_1^2)^2}&\frac{(x_1^2-x_0^2)x_4-2x_0x_1x_5}{(x_0^2+x_1^2)^2}&0&0&-\frac{x_1}{x_0^2+x_1^2}&\frac{x_0}{x_0^2+x_1^2}&0&0\\
\frac{(x_1^2-x_0^2)x_6-2x_0x_1x_7}{(x_0^2+x_1^2)^2}&\frac{(x_0^2-x_1^2)x_7-2x_0x_1x_6}{(x_0^2+x_1^2)^2}&0&0&0&0&\frac{x_0}{x_0^2+x_1^2}&\frac{x_1}{x_0^2+x_1^2}\\
\frac{(x_1^2-x_0^2)x_7+2x_0x_1x_6}{(x_0^2+x_1^2)^2}&\frac{(x_1^2-x_0^2)x_6-2x_0x_1x_7}{(x_0^2+x_1^2)^2}&0&0&0&0&-\frac{x_1}{x_0^2+x_1^2}&\frac{x_0}{x_0^2+x_1^2}\end{array}\right).$}}$$

By straightforward calculations, we know that
$$\det([d(\varphi_0\circ H)][d(\varphi_0\circ
H)]^t)=(x_0^2+x_1^2)^{-8},$$ therefore, the matrix
$d(\varphi_0\circ H)$ has rank 6 and the dimension of its kernel
is 2: $$\dim_\real\ker d(\varphi_0\circ H)=2.$$ Moreover, since
$$d(\varphi_0\circ H)(N_{n+1})=d(\varphi_0\circ H)(V_{n+1})=0,$$
by direct calculations, we conclude $$\ker d(\varphi_0\circ
H)=\spn\{N_{n+1},V_{n+1}\}.$$ This implies that
$$\ker dH=\spn\{V_{n+1}\}.$$

\section{Application of the first quaternionic Hopf fibration}

Trying to imitate the work already done for $S^3$, we
find through the quaternionic Hopf bundle $S^3\to S^7\to S^4$ one of the
natural choice of horizontal distributions. We consider the quaternionic Hopf map given by

\begin{equation}\label{qHopf}\begin{array}{ccccc}h&:&S^7&\to&S^4\\&&(z,w)&\mapsto&(|z|^2-|w|^2,2z\bar{w})
\end{array}.\end{equation}

It can be written in real coordinates: \begin{eqnarray}\label{rqHopf} h(x_0,\ldots,x_7)  =
(x_0^2+x_1^2+x_2^2+x_3^2-x_4^2-x_5^2-x_6^2-x^7, \\
2 (x_0x_4+x_1x_5+x_2x_6+x_3x_7), 2 (-x_0x_5+x_1x_4-x_2x_7+x_3x_6),\nonumber
\\ 2(-x_0x_6+x_1x_7+x_2x_4-x_3x_5),  2(-x_0x_7-x_1x_6+x_2x_5+x_3x_4))\nonumber .\end{eqnarray}
The differential map $dh$ of $h$ is the following:

$$dh=2\left(\begin{array}{cccccccc}x_0&x_1&x_2&x_3&-x_4&-x_5&-x_6&-x_7\\
x_4&x_5&x_6&x_7&x_0&x_1&x_2&x_3\\-x_5&x_4&-x_7&x_6&x_1&-x_0&x_3&-x_2\\
-x_6&x_7&x_4&-x_5&x_2&-x_3&-x_0&x_1\\-x_7&-x_6&x_5&x_4&x_3&x_2&-x_1&-x_0
\end{array}\right).$$

Since no one of the commutators $[Y_i,Y_j]$, $i,j=1,\ldots,7$ coincides with the $Y_k$, $k=1,\ldots,7$, we look for the kernel of $dh$ among the commutators $Y_{ij}$, $i,j=1,\ldots,7$. We found that
$[dh]Y_{45}=[dh]Y_{46}=[dh]Y_{56}=0$. Define $V=\{Y_{45},Y_{46},Y_{56}\}$.

Our next step is to find the horizontal distribution $\spn\{\mathcal H\}$ that is transverse to $\spn\{V\}$ and bracket generating: $\spn\{\mathcal H\}_p\oplus\spn\{V\}_p=T_pS^7$ for all $p\in S^7$. To begin we define five basis for horizontal distributions, that we shall work with. The numeration is valid only for this section.

$$\mathcal H_0=\{Y_{47},Y_{57},Y_{67}, W\},$$ $$\mathcal H_1=\{Y_{34},Y_{35},Y_{36}, Y_{37}\},\quad \mathcal H_2=\{Y_{24},Y_{25},Y_{26}, Y_{27}\},$$ $$ \mathcal H_3=\{Y_{14},Y_{15},Y_{16}, Y_{17}\},\quad \mathcal H_4=\{Y_{04},Y_{05},Y_{06}, Y_{07}\},$$ where the vector field $W$ will be defined later and the notation $Y_{0k}=Y_{k}$ is chosen for the convenience.

We collect some useful information about sets $\mathcal H_m$, $m=0,\ldots,4$, that we exploit later.
\begin{itemize}
\item[1.]{All vector fields inside $\mathcal H_m$, $m=0,1,2,3,4$ are orthonormal (we do not count $W$ before we precise it).}
\item[2.]{All of collections $\mathcal H_m$, $m=0,1,2,3,4$ are bracket generating with the following commutator relations: $$\frac{1}{2}[Y_{j4},Y_{j5}]=Y_{45},\quad \frac{1}{2}[Y_{j4},Y_{j6}]=Y_{46},\quad \frac{1}{2}[Y_{j5},Y_{j6}]=Y_{56},\quad j=0,1,2,3, $$ $$\frac{1}{2}[Y_{47},Y_{57}]=Y_{45},\quad \frac{1}{2}[Y_{47},Y_{67}]=Y_{46},\quad \frac{1}{2}[Y_{57},Y_{67}]=Y_{56}.$$}
\item[3.]{We aim to calculate the angles between the vector fields from $\mathcal H_m$, $m=0,1,2,3,4$ and between vector fields from $\mathcal H_m$ and $V$. Beforehand, we introduce the following notations for the coordinates on the sphere $S^4$ given by the Hopf map $S^3\to S^7\to S^4$.
\begin{eqnarray}\label{eq:0}
a_{00} & = & y_0^2+y_1^2+y_2^2+y_3^2-y_4^2-y_5^2-y_6^2-y^2_7,  \nonumber \\
a_{11} & = & 2(y_0y_4+y_1y_5+y_2y_6+y_3y_7), \nonumber \\
a_{22} & = & 2(-y_0y_5+y_1y_4-y_2y_7+y_3y_6),\\
a_{33} & = & 2(-y_0y_6+y_1y_7+y_2y_4-y_3y_5),  \nonumber \\
a_{44} & = & 2(-y_0y_7-y_1y_6+y_2y_5+y_3y_4).\nonumber
\end{eqnarray} The first index of $a_{mk}$ reflects the number of the collection $\mathcal H_m$, where they will appear and the second one is related to the number of the coordinate on $S^4$.
}
\end{itemize}
We start from $\mathcal H_0$ and calculate the inner products:
\begin{equation*}\label{eq:00}
 \langle Y_{45},Y_{67} \rangle=-\langle Y_{46},Y_{57} \rangle=\langle Y_{56},Y_{47} \rangle=a_{00}.
\end{equation*} All other vector fields are orthogonal. We continue for $\mathcal H_1$.
\begin{equation}\label{eq:1}
\begin{array}{lllll}
 \langle Y_{45},Y_{36} \rangle & = & - \langle Y_{46},Y_{35} \rangle
\ \ = \ \ \langle Y_{56},Y_{34} \rangle & = & a_{11}  \\
 \langle Y_{45},Y_{37} \rangle & = & 2(-y_0y_5+y_1y_4+y_2y_7-y_3y_6) & = & a_{12} \\
 \langle Y_{46},Y_{37} \rangle & = & 2(-y_0y_6-y_1y_7+y_2y_4+y_3y_5) & = & a_{13} \\
 \langle Y_{56},Y_{37} \rangle & = & 2(y_0y_7-y_1y_6+y_2y_5-y_3y_4) & = & a_{14}. \\
\end{array}
\end{equation} All other vector fields in $\mathcal H_1\cup V$ are orthogonal. For the set $\mathcal H_2$ we see the following:
\begin{equation}\label{eq:2}
\begin{array}{lllll}
- \langle Y_{45},Y_{26} \rangle & = &  \langle Y_{46},Y_{25} \rangle
\ \ = \ \ - \langle Y_{56},Y_{24} \rangle & = & a_{22}  \\
 \langle Y_{45},Y_{27} \rangle & = & 2(y_0y_4+y_1y_5-y_2y_6-y_3y_7) & = & a_{21} \\
 \langle Y_{46},Y_{27} \rangle & = & 2(-y_0y_7+y_1y_6+y_2y_5-y_3y_4) & = & a_{24} \\
 \langle Y_{56},Y_{27} \rangle & = & 2(-y_0y_6-y_1y_7-y_2y_4-y_3y_5) & = & a_{23} \\
\end{array}
\end{equation} The other products between vector fields from $\mathcal H_2\cup V$ vanish. For $\mathcal H_3$ the situation is very similar.
\begin{equation}\label{eq:3}
\begin{array}{lllll}
\langle Y_{45},Y_{16} \rangle & = & - \langle Y_{46},Y_{15} \rangle
\ \ = \ \  \langle Y_{56},Y_{14} \rangle & = & a_{33}  \\
 \langle Y_{45},Y_{17} \rangle & = & 2(-y_0y_7-y_1y_6-y_2y_5-y_3y_4) & = & a_{34} \\
 \langle Y_{46},Y_{17} \rangle & = & 2(-y_0y_4+y_1y_5-y_2y_6+y_3y_7) & = & a_{31} \\
 \langle Y_{56},Y_{17} \rangle & = & 2(-y_0y_5-y_1y_4+y_2y_7+y_3y_6) & = & a_{32} \\
\end{array}
\end{equation}
All other vector fields from $\mathcal H_3\cup V$ are orthogonal. For the last collection $\mathcal H_4$ we obtain.
\begin{equation}\label{eq:4}
\begin{array}{lllll}
\langle Y_{45},Y_{06} \rangle & = & - \langle Y_{46},Y_{05} \rangle
\ \ = \ \  \langle Y_{56},Y_{04} \rangle & = & a_{44}  \\
 \langle Y_{45},Y_{07} \rangle & = & 2(y_0y_6-y_1y_7+y_2y_4-y_3y_5) & = & a_{43} \\
 \langle Y_{46},Y_{07} \rangle & = & 2(-y_0y_5-y_1y_4-y_2y_7-y_3y_6) & = & a_{42} \\
 \langle Y_{56},Y_{07} \rangle & = & 2(y_0y_4-y_1y_5-y_2y_6+y_3y_7) & = & a_{41} \\
\end{array}
\end{equation} with the rest of the product vanishing.

We notice some relations between the coefficients $a_{mk}$. The coordinates on $S^4$ possesses the equality
\begin{equation}\label{hopfcoord}
a_{00}^2+a_{11}^2+a_{22}^2+a_{33}^2+a_{44}^2=1.
\end{equation} The direct calculations also show
\begin{eqnarray}\label{cos}
a_{00}^2+a_{11}^2+a_{12}^2+a_{13}^2+a_{14}^2 & = & 1\nonumber \\
a_{00}^2+a_{21}^2+a_{22}^2+a_{23}^2+a_{24}^2 & = & 1 \\
a_{00}^2+a_{31}^2+a_{32}^2+a_{33}^2+a_{34}^2 & = & 1\nonumber \\
a_{00}^2+a_{41}^2+a_{42}^2+a_{43}^2+a_{44}^2 & = & 1.\nonumber
\end{eqnarray} In other words the sum of the squares of the cosines between vector fields from $\mathcal H_m\cup V$, $m=1,2,3,4$ is equal to $1-a_{00}^{2}$. Let us consider 2 cases: $0<a_{00}^{2}\leq 1$ and $a_{00}^{2}=0$.

\medskip

{\sc Case $0<a_{00}^{2}\leq 1$.}

\medskip

This case corresponds to any point on $S^4$
except of the set \begin{equation}\label{S1}S_1=\{y_0^2+y_1^2+y_2^2+y_3^2=y_4^2+y_5^2+y_6^2+y^2_7=1/2\}.\end{equation} We observe that the sum of the square of the cosines from~\eqref{cos}: $$\sum_{k=1}^{4}a_{mk}^2=1-a_{00}^2, \quad m=1,2,3,4$$ belongs to the interval $(0,1)$ and no one of the cosines can be equal to $1$. We conclude that each of $\mathcal H_m$, $m=1,2,3,4$, is transverse to $V$. Particularly, if $a_{00}^{2}=1$ then $\sum_{k=1}^{4}a_{mk}^2=0$ and $\mathcal H_m\bot V$. The latter situation occurs in the antipodal points $(\pm 1,0,0,0,0)\in S^4$ or is to say on the set \begin{equation}\label{S2}S_2=\{y_0^2+y_1^2+y_2^2+y_3^2=0,\ y_4^2+y_5^2+y_6^2+y^2_7=1\}
\ \cup$$ $$ \{y_0^2+y_1^2+y_2^2+y_3^2=1,\ y_4^2+y_5^2+y_6^2+y^2_7=0\}\in S^7.\end{equation}

We also can consider a collection $\mathcal H_0$, as a possible horizontal bracket generating distribution, if we choose an adequate vector field~$W$. If $a_{00}\in(0,1)$ we have $$0<a_{41}^2+a_{42}^2+a_{43}^2+a_{44}^2=1-a_{00}^2<1$$ and no one of the products in~\eqref{eq:4} can give $1$. We conclude that $Y_{07}$ can not be collinear to $V=\{Y_{45},Y_{46},Y_{56}\}$. Therefore, we choose $W=Y_{07}$. By the same reason we could take $Y_{j7}$, $j=1,2,3$. In the case when $a_{00}^{2}=1$ the vector fields $Y_{j7}$, $j=0,1,2,3$ are orthogonal to~$V$ but $\mathcal H_0$ is collinear to $V$ and the collection $\mathcal H_0$ with $W=Y_{j7}$ is not transverse to $V$.

\medskip

{\sc Case $a_{00}^{2}=0$.}

In this case the distribution $\mathcal H_0$ is nicely serve as a bracket generating if we find a suitable vector field $W$. Notice that~\eqref{hopfcoord} became
\begin{equation}\label{hopfcoord0}
a_{11}^2+a_{22}^2+a_{33}^2+a_{44}^2=1.
\end{equation} The $a_{mm}$ can not vanish simultaneously. Without lost of generality, we can assume that $a_{44}\neq 0$. Then $a^2_{41}+a^2_{42}+a^2_{43}=1-a^2_{44}<1$ from~\eqref{cos} and the products~\eqref{eq:4} imply that $Y_{07}$ is transverse to $V$ and can be used as a vector field $W$. In the case $a_{44}^2=1$ we get that $Y_{07}$ is orthogonal to $V$. Since $W\bot Y_{j7}$, $j=0,\ldots,3$ the collection $\mathcal H_0$ with any choice of $Y_{j7}$, $j=0,\ldots,3$ will be orthonormal.

We formulate the latter result in the following theorem

\begin{theo} Let~\eqref{qHopf} be the Hopf map with the vertical space $$V=\{Y_{45},Y_{46},Y_{56}\},$$ $S_1$ and $S_2$ are given by~\eqref{S1} and~\eqref{S2}. Then the Hoph map produces the following Ehresmann Connection $\mathcal H_p$, $p\in S^7$:
\begin{itemize}
\item [(i)]{if $p\notin S_1$ then $\mathcal H_p=(\mathcal H_m)_p$, for any choice of $m=1,2,3,4$;}
\item[(ii)]{if $p\notin S_2$ then $\mathcal H_p=(\mathcal H_0\cup Y_{j7})_p$, $j=0,1,2,3$;}
\end{itemize} and we have $$\spn\{\mathcal H_0,Y_{j7}\}_p\oplus\spn\{V\}_p=T_pS^7,\ j=0,1,2,3 \quad\text{for all}\quad p\in S^7.$$
\end{theo}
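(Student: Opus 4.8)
The plan is to recast the case analysis preceding the theorem as a single linear-algebra computation, applied in turn to each of the candidate distributions. First I would pin down the vertical space: since the differential $dh$ of the quaternionic Hopf map~\eqref{qHopf} has rank $4$ and $Y_{45},Y_{46},Y_{56}$ are three linearly independent fields in $\ker dh$, one has $\ker d_ph=\spn\{V\}_p$, of dimension $3$, for every $p\in S^7$. A rank-$4$ distribution $\mathcal H$ is then an Ehresmann connection for $h$ exactly when $\mathcal H_p\oplus\spn\{V\}_p=T_pS^7$, and since $4+3=\dim T_pS^7$ this is equivalent to the $4+3$ generating vector fields being linearly independent at $p$, i.e. to the nonvanishing of their Gram determinant. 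Using the orthonormal generators of item~1 of the preliminary list — and that $Y_{45},Y_{46},Y_{56}$ are themselves orthonormal, which follows from the normalization in item~2 exactly as in the $S^3$ case — this Gram matrix has the block form $\left(\begin{smallmatrix}I_4&G\\ G^{t}&I_3\end{smallmatrix}\right)$, with $G$ the matrix of inner products between the generators of $\mathcal H$ and those of $V$, and its determinant equals $\det(I_3-G^{t}G)$.

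The key point is that for each candidate distribution the tables~\eqref{eq:00} and~\eqref{eq:1}--\eqref{eq:4} force $G$ to be very rigid: three of its rows are $\alpha$ times a $3\times3$ signed permutation matrix (coming from the orthonormal triples paired against $V$, with $\alpha=a_{m1}$ for $\mathcal H_m$ and $\alpha=a_{00}$ for $\mathcal H_0$), and the remaining row is a single vector $b$ coming from the field $Y_{m7}$, respectively from the chosen $W=Y_{j7}$. Hence $G^{t}G=\alpha^2 I_3+bb^{t}$, so $\det(I_3-G^{t}G)=(1-\alpha^2)^2\bigl(1-\alpha^2-|b|^2\bigr)$. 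For $\mathcal H=\mathcal H_m$ with $m=1,2,3,4$ one has $|b|^2=a_{m2}^2+a_{m3}^2+a_{m4}^2$, so the identity~\eqref{cos} makes the last factor equal to $a_{00}^2$ and the determinant equal to $(1-a_{m1}^2)^2a_{00}^2$, which is nonzero precisely when $a_{00}\neq0$, i.e. away from $S_1$ by~\eqref{S1}. Together with the commutator relations of item~2, which already display each of $Y_{45},Y_{46},Y_{56}$ as a bracket of generators of $\mathcal H_m$, this shows $\mathcal H_m$ is a step-$2$ bracket generating Ehresmann connection off $S_1$, which is part~(i).

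For $\mathcal H_0$ the same computation applies with $\alpha=a_{00}$; reading the $Y_{j7}$-row off the relevant table and using~\eqref{cos} gives $1-a_{00}^2-|b|^2=a_{j'j'}^2$, with $j'=4,3,2,1$ according as $j=0,1,2,3$, so $\det(I_3-G^{t}G)=(1-a_{00}^2)^2a_{j'j'}^2$. This is nonzero as soon as $a_{00}^2\neq1$, i.e. $p\notin S_2$ by~\eqref{S2}, and $a_{j'j'}\neq0$; but by~\eqref{hopfcoord} one has $a_{11}^2+a_{22}^2+a_{33}^2+a_{44}^2=1-a_{00}^2$, which is positive off $S_2$, so some choice of $j$ makes $\mathcal H_0\cup\{Y_{j7}\}$ transversal to the vertical — and bracket generating, again by item~2. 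This is~(ii), and since $S_1\cap S_2=\emptyset$ every $p\in S^7$ lies outside at least one of $S_1,S_2$, so (i) and (ii) together furnish an Ehresmann connection at each point; in particular, off $S_2$ one may always take a collection of the form $\mathcal H_0\cup\{Y_{j7}\}$, which is the content of the final display. The main obstacle is precisely this last case: unlike $\mathcal H_m$ it cannot be covered by one distribution, and one needs both the freedom in $j$ and the observation from~\eqref{hopfcoord} that the diagonal Hopf coordinates $a_{mm}$ never vanish simultaneously; on $S_2$ itself all four choices degenerate, since there $\spn\{Y_{47},Y_{57},Y_{67}\}=\spn\{V\}$, so the pointwise choice of $j$ is genuinely necessary.
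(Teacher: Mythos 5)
Your proposal is correct and uses the same raw data as the paper (the vertical space $V$, the candidate frames $\mathcal H_0,\dots,\mathcal H_4$, the inner--product tables \eqref{eq:00}--\eqref{eq:4} and the identities \eqref{cos}, \eqref{hopfcoord}), but the decisive transversality step is argued differently, and in fact more rigorously. The paper infers transversality of $\mathcal H_m$ to $V$ from the observation that $\sum_{k}a_{mk}^2=1-a_{00}^2<1$, so that ``no one of the cosines can be equal to $1$''; as a general principle this is insufficient (a frame can fail to be transversal to a subspace without any single generator being parallel to it). You replace this by the Schur--complement identity $\det\left(\begin{smallmatrix}I_4&G\\ G^{t}&I_3\end{smallmatrix}\right)=\det(I_3-G^{t}G)$ and exploit the rigid shape of $G$ (three rows forming $\alpha$ times a signed permutation, one residual row $b$) to get the exact values $(1-a_{mm}^2)^2a_{00}^2$ for $\mathcal H_m$ and $(1-a_{00}^2)^2a_{j'j'}^2$ for $\mathcal H_0\cup\{Y_{j7}\}$; this gives a clean if-and-only-if criterion and simultaneously shows that on $S_2$ the triple $\{Y_{47},Y_{57},Y_{67}\}$ collapses onto $\spn\{V\}$, so no choice of $Y_{j7}$ can rescue $\mathcal H_0$ there --- a point the theorem's final display glosses over and which you correctly isolate, together with the genuine need for a pointwise choice of $j$ off $S_2$ (a fixed $j$ fails wherever $a_{j'j'}=0$, e.g.\ at $(1/\sqrt2,0,0,0,1/\sqrt2,0,0,0)$ for $j=0$). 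Two small caveats: your scalar should be $\alpha=a_{mm}$ rather than $a_{m1}$ (and correspondingly $|b|^2=\sum_{k\neq m}a_{mk}^2$) --- the tables \eqref{eq:2}--\eqref{eq:4} pair the orthonormal triple of $\mathcal H_m$ against $V$ with coefficient $\pm a_{mm}$ --- but since $\{\alpha\}\cup\{b\}$ exhausts $\{a_{m1},\dots,a_{m4}\}$ the factor $1-\alpha^2-|b|^2=a_{00}^2$ and the conclusion are unaffected; and the orthonormality of $Y_{45},Y_{46},Y_{56}$ and of $Y_{j7}$ against $Y_{47},Y_{57},Y_{67}$, which your Gram matrix presupposes, does not really ``follow from item~2'' but must be (and easily is) checked from the explicit formulas in Subsection~\ref{crivf}.
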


\section{Appendix}

\subsection{Multiplication of octonions}\label{multoct}

Let $$o_1=(x_0e_0+x_1e_1+x_2e_2+x_3e_3+x_4e_4+x_5e_5+x_6e_6+x_7e_7)$$
and $$o_2=(y_0e_0+y_1e_1+y_2e_2+y_3e_3+y_4e_4+y_5e_5+y_6e_6+y_7e_7)$$ be two
octonions. Then we have according to Table~\ref{oct}

$$o_1\circ o_2=(x_0e_0+x_1e_1+x_2e_2+x_3e_3+x_4e_4+x_5e_5+x_6e_6+x_7e_7)\circ$$
$$(y_0e_0+y_1e_1+y_2e_2+y_3e_3+y_4e_4+y_5e_5+y_6e_6+y_7e_7)=$$
$$=(x_0y_0-x_1y_1-x_2y_2-x_3y_3-x_4y_4-x_5y_5-x_6y_6-x_7y_7)e_0+$$
$$+(x_1y_0+x_0y_1-x_3y_2+x_2y_3-x_5y_4+x_4y_5+x_7y_6-x_6y_7)e_1+$$
$$+(x_2y_0+x_3y_1+x_0y_2-x_1y_3-x_6y_4-x_7y_5+x_4y_6+x_5y_7)e_2+$$
$$+(x_3y_0-x_2y_1+x_1y_2+x_0y_3-x_7y_4+x_6y_5-x_5y_6+x_4y_7)e_3+$$
$$+(x_4y_0+x_5y_1+x_6y_2+x_7y_3+x_0y_4-x_1y_5-x_2y_6-x_3y_7)e_4+$$
$$+(x_5y_0-x_4y_1+x_7y_2-x_6y_3+x_1y_4+x_0y_5+x_3y_6-x_2y_7)e_5+$$
$$+(x_6y_0-x_7y_1-x_4y_2+x_5y_3+x_2y_4-x_3y_5+x_0y_6+x_1y_7)e_6+$$
$$+(x_7y_0+x_6y_1-x_5y_2-x_4y_3+x_3y_4+x_2y_5-x_1y_6+x_0y_7)e_7.$$

\subsection{Right invariant vector fields}\label{rivf}

According to the previous multiplication rule, we have the
following unit vector fields of $\real^8$ arising as right
invariants vector fields under the octonion multiplication.
$$
Y_0(y)=y_0\partial_{y_0}+y_1\partial_{y_1}+y_2\partial_{y_2}+
y_3\partial_{y_3}+y_4\partial_{y_4}+y_5\partial_{y_5}+
y_6\partial_{y_6}+y_7\partial_{y_7}
$$ $$
Y_1(y)=-y_1\partial_{y_0}+y_0\partial_{y_1}-y_3\partial_{y_2}+
y_2\partial_{y_3}-y_5\partial_{y_4}+y_4\partial_{y_5}-
y_7\partial_{y_6}+y_6\partial_{y_7}
$$ $$
Y_2(y)=-y_2\partial_{y_0}+y_3\partial_{y_1}+y_0\partial_{y_2}-
y_1\partial_{y_3}-y_6\partial_{y_4}+y_7\partial_{y_5}+
y_4\partial_{y_6}-y_5\partial_{y_7}
$$ $$
Y_3(y)=-y_3\partial_{y_0}-y_2\partial_{y_1}+y_1\partial_{y_2}+
y_0\partial_{y_3}+y_7\partial_{y_4}+y_6\partial_{y_5}-
y_5\partial_{y_6}-y_4\partial_{y_7}
$$ $$
Y_4(y)=-y_4\partial_{y_0}+y_5\partial_{y_1}+y_6\partial_{y_2}-
y_7\partial_{y_3}+y_0\partial_{y_4}-y_1\partial_{y_5}-
y_2\partial_{y_6}+y_3\partial_{y_7}
$$ $$
Y_5(y)=-y_5\partial_{y_0}-y_4\partial_{y_1}-y_7\partial_{y_2}-
y_6\partial_{y_3}+y_1\partial_{y_4}+y_0\partial_{y_5}+
y_3\partial_{y_6}+y_2\partial_{y_7}
$$ $$
Y_6(y)=-y_6\partial_{y_0}+y_7\partial_{y_1}-y_4\partial_{y_2}+
y_5\partial_{y_3}+y_2\partial_{y_4}-y_3\partial_{y_5}+
y_0\partial_{y_6}-y_1\partial_{y_7}
$$ $$
Y_7(y)=-y_7\partial_{y_0}-y_6\partial_{y_1}+y_5\partial_{y_2}+
y_4\partial_{y_3}-y_3\partial_{y_4}-y_2\partial_{y_5}+
y_1\partial_{y_6}+y_0\partial_{y_7}.
$$

The vector fields $Y_i$, $i=1,\ldots,7$ form an orthonormal frame of $T_pS^7$, $p\in S^7$, with respect to restriction of the inner product $\langle\cdot,\cdot\rangle$ from $\mathbb R^8$ to the tangent space $T_pS^7$ at each $p\in S^7$.

\subsection{Commutators between right invariant vector fields}\label{crivf}
Denoting by $Y_{ij}(y)=\frac12[Y_i(y),Y_j(y)]$ the commutator between
the right invariant vector fields, described in the previous
subsection, we have the following list:
$$
Y_{12}(y)=y_3\partial_{y_0}+y_2\partial_{y_1}-y_1\partial_{y_2}-
y_0\partial_{y_3}+y_7\partial_{y_4}+y_6\partial_{y_5}-
y_5\partial_{y_6}-y_4\partial_{y_7}
$$ $$
Y_{13}(y)=-y_2\partial_{y_0}+y_3\partial_{y_1}+y_0\partial_{y_2}-
y_1\partial_{y_3}+y_6\partial_{y_4}-y_7\partial_{y_5}-
y_4\partial_{y_6}+y_5\partial_{y_7}
$$ $$
Y_{14}(y)=y_5\partial_{y_0}+y_4\partial_{y_1}-y_7\partial_{y_2}-
y_6\partial_{y_3}-y_1\partial_{y_4}-y_0\partial_{y_5}+
y_3\partial_{y_6}+y_2\partial_{y_7}
$$ $$
Y_{15}(y)=-y_4\partial_{y_0}+y_5\partial_{y_1}-y_6\partial_{y_2}+
y_7\partial_{y_3}+y_0\partial_{y_4}-y_1\partial_{y_5}+
y_2\partial_{y_6}-y_3\partial_{y_7}
$$ $$
Y_{16}(y)=y_7\partial_{y_0}+y_6\partial_{y_1}+y_5\partial_{y_2}+
y_4\partial_{y_3}-y_3\partial_{y_4}-y_2\partial_{y_5}-
y_1\partial_{y_6}-y_0\partial_{y_7}
$$ $$
Y_{17}(y)=-y_6\partial_{y_0}+y_7\partial_{y_1}+y_4\partial_{y_2}-
y_5\partial_{y_3}-y_2\partial_{y_4}+y_3\partial_{y_5}+
y_0\partial_{y_6}-y_1\partial_{y_7}
$$ $$
Y_{23}(y)=y_1\partial_{y_0}-y_0\partial_{y_1}+y_3\partial_{y_2}-
y_2\partial_{y_3}-y_5\partial_{y_4}+y_4\partial_{y_5}-
y_7\partial_{y_6}+y_6\partial_{y_7}
$$ $$
Y_{24}(y)=y_6\partial_{y_0}+y_7\partial_{y_1}+y_4\partial_{y_2}+
y_5\partial_{y_3}-y_2\partial_{y_4}-y_3\partial_{y_5}-
y_0\partial_{y_6}-y_1\partial_{y_7}
$$ $$
Y_{25}(y)=-y_7\partial_{y_0}+y_6\partial_{y_1}+y_5\partial_{y_2}-
y_4\partial_{y_3}+y_3\partial_{y_4}-y_2\partial_{y_5}-
y_1\partial_{y_6}+y_0\partial_{y_7}
$$ $$
Y_{26}(y)=-y_4\partial_{y_0}-y_5\partial_{y_1}+y_6\partial_{y_2}+
y_7\partial_{y_3}+y_0\partial_{y_4}+y_1\partial_{y_5}-
y_2\partial_{y_6}-y_3\partial_{y_7}
$$ $$
Y_{27}(y)=y_5\partial_{y_0}-y_4\partial_{y_1}+y_7\partial_{y_2}-
y_6\partial_{y_3}+y_1\partial_{y_4}-y_0\partial_{y_5}+
y_3\partial_{y_6}-y_2\partial_{y_7}
$$ $$
Y_{34}(y)=-y_7\partial_{y_0}+y_6\partial_{y_1}-y_5\partial_{y_2}+
y_4\partial_{y_3}-y_3\partial_{y_4}+y_2\partial_{y_5}-
y_1\partial_{y_6}+y_0\partial_{y_7}
$$ $$
Y_{35}(y)=-y_6\partial_{y_0}-y_7\partial_{y_1}+y_4\partial_{y_2}+
y_5\partial_{y_3}-y_2\partial_{y_4}-y_3\partial_{y_5}+
y_0\partial_{y_6}+y_1\partial_{y_7}
$$ $$
Y_{36}(y)=y_5\partial_{y_0}-y_4\partial_{y_1}-y_7\partial_{y_2}+
y_6\partial_{y_3}+y_1\partial_{y_4}-y_0\partial_{y_5}-
y_3\partial_{y_6}+y_2\partial_{y_7}
$$ $$
Y_{37}(y)=y_4\partial_{y_0}+y_5\partial_{y_1}+y_6\partial_{y_2}+
y_7\partial_{y_3}-y_0\partial_{y_4}-y_1\partial_{y_5}-
y_2\partial_{y_6}-y_3\partial_{y_7}
$$ $$
Y_{45}(y)=y_1\partial_{y_0}-y_0\partial_{y_1}-y_3\partial_{y_2}+
y_2\partial_{y_3}+y_5\partial_{y_4}-y_4\partial_{y_5}-
y_7\partial_{y_6}+y_6\partial_{y_7}
$$ $$
Y_{46}(y)=y_2\partial_{y_0}+y_3\partial_{y_1}-y_0\partial_{y_2}-
y_1\partial_{y_3}+y_6\partial_{y_4}+y_7\partial_{y_5}-
y_4\partial_{y_6}-y_5\partial_{y_7}
$$ $$
Y_{47}(y)=-y_3\partial_{y_0}+y_2\partial_{y_1}-y_1\partial_{y_2}+
y_0\partial_{y_3}+y_7\partial_{y_4}-y_6\partial_{y_5}+
y_5\partial_{y_6}-y_4\partial_{y_7}
$$ $$
Y_{56}(y)=-y_3\partial_{y_0}+y_2\partial_{y_1}-y_1\partial_{y_2}+
y_0\partial_{y_3}-y_7\partial_{y_4}+y_6\partial_{y_5}-
y_5\partial_{y_6}+y_4\partial_{y_7}
$$ $$
Y_{57}(y)=-y_2\partial_{y_0}-y_3\partial_{y_1}+y_0\partial_{y_2}+
y_1\partial_{y_3}+y_6\partial_{y_4}+y_7\partial_{y_5}-
y_4\partial_{y_6}-y_5\partial_{y_7}
$$ $$
Y_{67}(y)=y_1\partial_{y_0}-y_0\partial_{y_1}-y_3\partial_{y_2}+
y_2\partial_{y_3}-y_5\partial_{y_4}+y_4\partial_{y_5}+
y_7\partial_{y_6}-y_6\partial_{y_7}.
$$

\end{document}